\theoremstyle{plain}
\newtheorem{theo}{Theorem}[section]
\newtheorem{prop}[theo]{Proposition}
\newtheorem{lem}[theo]{Lemma}
\newtheorem{coro}[theo]{Corollary}
\newtheorem*{theorem*}{Theorem}
 \theoremstyle{definition}
 \theoremstyle{remark}
 \newtheorem{rem}[theo]{Remark}
\numberwithin{equation}{section}
\DeclareMathOperator{\gr}{grad}
\DeclareMathOperator{\wed}{wd}
\begin{document}

\title[Free group algebras]{Free group algebras in division rings with valuation I}
\author{Javier S\'anchez}

\address{Department of Mathematics - IME, University of S\~ao Paulo,
Rua do Mat\~ao 1010, S\~ao Paulo, SP, 05508-090, Brazil}
\email{jsanchez@ime.usp.br}
\thanks{Supported by FAPESP-Brazil, Proj. Tem\'atico 2015/09162-9, by 
 Grant CNPq 307638/2015-4 and
by DGI-MINECO-FEDER 
through the grants MTM2014-53644-P and MTM2017-83487-P}

\subjclass[2000]{16K40, 16W60, 	16W50, 6F15,   16S10}

\keywords{Division rings, graded rings, ordered groups, free group algebra}

\begin{abstract}
Let $R$ be an algebra over a commutative ring $k$.
Suppose that $R$ is endowed with a descending filtration indexed
on an ordered group $(G,<)$ such that the restriction to $k$
is positive.  We 
show that the existence of free algebras on a certain set of generators in the induced
graded ring $\gr(R)$ implies the existence of free group algebras in $R$. Our best results are obtained
for division rings endowed with a valuation.
\end{abstract}

\maketitle

\setcounter{tocdepth}{1}
\tableofcontents

\section*{Introduction}
In the study of division rings, two conjectures that still remain open have attracted 
atention during the last decades. The first one was stated by A. I. Lichtman in \cite{Lichtmanonsubgroupsof}:

\begin{enumerate}[(G)]
\item If a division ring $D$ is not commutative (i.e. not a field),
then $D\setminus\{0\}$ contains a noncyclic free group.
\end{enumerate}
It holds true when the center of $D$ is
uncountable~\cite{chibafreegroupsinsidedivisionrings} and when $D$
is finite-dimensional over its center
\cite{Goncalvesfreegroupsinsubnormal}. The second conjecture was
formulated in~\cite{Makaronfreesubobjects}:
\begin{enumerate}[(A)]
\item Let $D$ be a skew field with center $Z$. If $D$ is finitely
generated as division ring over $Z$, then $D$ contains a noncommutative
free $Z$-algebra.
\end{enumerate}
Some important results have been obtained in \cite{Makar1},
\cite{Makar2}, \cite{Lichtmanfreeuniversalenveloping},
\cite{BellRogalskifreesubalgebrasoreextensions}, \cite{FerreiraFornaroliGoncalves} and other papers. In
many examples in which conjecture (A) holds, $D$ in fact contains a
noncommutative free group $Z$-algebra.  For example, this always happens
if the center of $D$ is uncountable~\cite{GoncalvesShirvani} (or \cite{SanchezObtaininggraded}
for a slightly more general result).
Other evidences of the 
existence of free group algebras in division rings
can be found in \cite{Cauchoncorps}, \cite{Makar-LimanovOnsubalgebrasofthe}, 
\cite{Lichtmanfreeuniversalenveloping},
\cite{SanchezfreegroupalgebraMNseries}.
Therefore it makes sense to consider the following unifying
conjecture:
\begin{enumerate}[(GA)]
\item Let $D$ be a skew field with center $Z$.
If $D$ is  finitely generated as a division ring over $Z$ and $D$ is
infinite dimensional over $Z$, then $D$ contains a noncommutative
free group $Z$-algebra.
\end{enumerate}
For more details on these and related conjectures the reader is
referred to \cite{GoncalvesShirvaniSurvey}.

A result that allowed to
assert the existence of free group algebras from the existence of a free algebra is 
\cite[Proposition~4]{LichtmanmatrixringsII}, but, unfortunately, it is wrong.
Another incorrect proof was given by the author in \cite[Lemma~7.3]{HerberaSanchez}.
We supply a counterexample to the proposition in Appendix~\ref{sec:counterexample}. 

The main aim of this work is Theorem~\ref{coro:divisionrings} which provides what can
be regarded as a correct
statement of \cite[Proposition~4]{LichtmanmatrixringsII} and a generalization  in some aspects.
Theorem~\ref{coro:divisionrings} gives sufficient conditions for the existence of a free group algebra
in a division ring endowed with a valuation with values on an ordered group. Roughly speaking,
it says that the existence of a certain free algebra in the graded
division ring induced by the valuation implies the existence of a free group algebra in the original 
division ring. Philosophically our theorem may be viewed as a result in the vein of what happens
when studying finite dimensional (over their center) division rings endowed with a valuation
\cite{TignolWadsworthbook}:   some information about the division ring
can be recovered from the graded ring, moreover this object
usually turns out to be  simpler and easier to work with than the original division ring.

\medskip

We now proceed to give a more precise description of our techniques and statement of our results.

Let $R$ be a ring, $(G,<)$ an ordered group
and $\upsilon\colon R\rightarrow G\cup\{\infty\}$ be a valuation.
For $S\subset D$ and $g\in G$ we denote
$$S_{\geq g}=\{s\in S\colon \upsilon(s)\geq g\},\quad S_{>g}=\{s\in S\colon \upsilon(s)>g\}.$$ Recall
that $$\gr_\upsilon(R)=\bigoplus_{g\in G} R_g, \textrm{ where } R_g=R_{\geq g}/R_{>g}.$$

When the value group of the valuation is a subgroup of the real numbers, our main result  is as follows.

\begin{theorem*}\label{coro:divisionrings}
Let $D$ be a division ring with prime subring $Z$. Let  $\upsilon\colon
D\rightarrow \mathbb{R}\cup\{\infty\}$ be a nontrivial valuation.
 Let
$X$ be a subset of $D$ satisfying the following
three conditions.
\begin{enumerate}[\rm (1)]
\item The map $X\rightarrow\gr_\upsilon(D)$, $x\mapsto x+D_{>\upsilon(x)}$, is injective.
\item For each $x\in X$, $\upsilon(x)>0$.
\item The $Z_0$-subalgebra of $\gr_\upsilon(D)$ generated by the set
$\{x+D_{>\upsilon(x)}\}_{x\in X}$ is the free $Z_0$-algebra
on the set $\{x+D_{>\upsilon(x)}\}_{x\in X}$, where 
$Z_0\coloneqq Z_{\geq 0}/Z_{>0}\subseteq D_0$.
\end{enumerate}
Then, for any central subfield $k$,  the
$k$-subalgebra of $D$ generated by $\{1+x,\, (1+x)^{-1}\}_{x\in
X}$ is the free group $k$-algebra on $\{1+x\}_{x\in X}$. \qed
\end{theorem*}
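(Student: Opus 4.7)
The plan is to prove that the canonical $k$-algebra homomorphism
$$\phi\colon kF\longrightarrow D,\qquad \alpha_x\longmapsto 1+x,$$
is injective, where $F$ is the free group on the formal symbols $\{\alpha_x\}_{x\in X}$. Condition (2) yields $\upsilon(1+x)=0$, so each $1+x$ is a unit of $D$ and $\phi$ is well defined. The idea is to compare $\phi$ with the classical Magnus embedding
$$\mu\colon kF\hookrightarrow k\langle\!\langle X\rangle\!\rangle,\qquad \alpha_x\longmapsto 1+x$$
(invertible in the power series algebra via the formal geometric series), and then use hypothesis (3) in $\gr_\upsilon(D)$ to transfer nonvanishing of $\mu(f)$ down to nonvanishing of $\phi(f)$ in $D$.

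Fix a nonzero $f=\sum_w c_w w\in kF$; only finitely many $x_i\in X$ appear in $f$, and set $\delta\coloneqq\min_i\upsilon(x_i)>0$. Since $\upsilon(M)\geq\delta\cdot\mathrm{length}(M)$ for every monomial $M$ in the $x_i$, the Magnus expansion $\mu(f)=\sum_M d_M M$ has only finitely many nonzero terms of any bounded $\upsilon$-valuation, so
$$m_0\coloneqq\min\{\upsilon(M):d_M\neq 0\}$$
exists. The identity $(1+x)^{-1}=\sum_{k=0}^N(-x)^k+(-x)^{N+1}(1+x)^{-1}$ in $D$ gives $(1+x_i)^{-1}\equiv\sum_{k=0}^N(-x_i)^k\pmod{D_{>m_0}}$ once $N$ is large enough that $(N+1)\delta>m_0$. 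Substituting these truncations into each word expansion in $\phi(f)$ and noting that every correction term acquires at least one factor in $D_{>m_0}$ while the remaining factors lie in $D_{\geq 0}$, one obtains $\phi(f)\equiv Q(x_1,\ldots,x_s)\pmod{D_{>m_0}}$ for an explicit polynomial $Q\in k\langle x_1,\ldots,x_s\rangle$. Performing the same truncation formally in $k\langle\!\langle X\rangle\!\rangle$ identifies $Q$ with $\mu(f)$ modulo monomials of $\upsilon$-valuation $>m_0$; combined with the choice of $m_0$, this forces
$$\phi(f)\equiv\sum_{\upsilon(M)=m_0}d_M\,M\pmod{D_{>m_0}},$$
with not all $d_M$ zero.

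The remaining step is to show that this sum is nonzero in the graded piece $D_{m_0}=D_{\geq m_0}/D_{>m_0}$. Hypothesis (3) provides $Z_0$-linear independence of the family $\{M+D_{>\upsilon(M)}\}_M$ in $\gr_\upsilon(D)$. Since $k\subseteq D_{\geq 0}$ is a central subfield whose image in the residue division ring $D_0$ contains $Z_0$, a flatness/clearing-denominators argument should upgrade this to $k$-linear independence of $\{M+D_{>m_0}\}_{\upsilon(M)=m_0}$ at the fixed graded piece $D_{m_0}$. Granting this, $\sum_{\upsilon(M)=m_0}d_M(M+D_{>m_0})\neq 0$ in $D_{m_0}$, so $\phi(f)\notin D_{>m_0}$ and $\phi(f)\neq 0$. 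Hence $\phi$ is injective, so the $k$-subalgebra in question is the free group $k$-algebra on $\{1+x\}_{x\in X}$. I expect the two main technical obstacles to be (i) making the truncation/substitution argument rigorous so the polynomial identity $\phi(f)\equiv Q\pmod{D_{>m_0}}$ really holds, and (ii) the upgrade from $Z_0$-linear to $k$-linear independence at the distinguished graded piece $D_{m_0}$.
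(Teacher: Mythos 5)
Your overall strategy -- compare $\alpha_x\mapsto 1+x$ with the Magnus--Fox embedding into $k\langle\!\langle X\rangle\!\rangle$ and use the valuation to show that the lowest-valuation part of $\phi(f)$ survives in the graded ring by hypothesis (3) -- is the same as the paper's. The paper packages the truncation bookkeeping differently (it builds an embedding $Z\langle\!\langle Y\rangle\!\rangle\hookrightarrow\varprojlim O/K_d$ where $O=D_{\geq 0}$ and $K_d=D_{\geq d}$, and then invokes Fox's theorem inside the completion), but that is a presentational difference, not a mathematical one.

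There is, however, a genuine gap in the way you handle the coefficient field. You run the whole argument with coefficients in the arbitrary central subfield $k$ and assert that ``$k\subseteq D_{\geq 0}$ is a central subfield whose image in $D_0$ contains $Z_0$.'' This is false in general: a valuation on $D$ need not be trivial (or even nonnegative) on a central subfield. For instance, if $D=\mathbb{Q}_p$ with the $p$-adic valuation and $k=\mathbb{Q}$, then $\upsilon(1/p)=-1$. Once coefficients of negative valuation are allowed, your quantity $m_0=\min\{\upsilon(M):d_M\neq 0\}$ no longer controls the leading term: the relevant minimum is over $\upsilon(d_M)+\upsilon(M)$, terms $d_M M$ with $\upsilon(M)>m_0$ can still land in $D_{\leq m_0}$, and the congruence $\phi(f)\equiv\sum_{\upsilon(M)=m_0}d_M M\pmod{D_{>m_0}}$ breaks down. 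Moreover, even after repairing this, the final step -- upgrading the $Z_0$-linear independence supplied by (3) to $k$-linear independence of the monomial images -- is not a ``flatness/clearing-denominators'' triviality; it is exactly the Makar-Limanov--Malcolmson lemma (Lemma~\ref{lem:MalcolmsonMakarLimanov}), which has real content. The paper sidesteps both problems by first proving the theorem with coefficients in the prime subring $Z$ only (where $\upsilon|_Z\geq 0$ automatically, by Lemma~\ref{lem:valuationintegers}), and only at the very end passing from the prime subfield to an arbitrary central subfield $k$ by applying Lemma~\ref{lem:MalcolmsonMakarLimanov} to the free group $\{1+x\}_{x\in X}$ inside $D\setminus\{0\}$. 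Your proof needs this same two-step structure (or an explicit substitute for it) to be correct.
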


The main idea of the proof of this result is the same as the one sketched by
Lichtman in \cite[Proposition~4]{LichtmanmatrixringsII}. One must show
that the completion of the subring $O=\{f\in D\geq 0\}$ of $D$ with respect
to the descending chain of ideals $L_n=\{f\in D\colon \upsilon(f)\geq n\}$, $n\in\mathbb{N}$,
contains a ring of formal power series $Z\langle\langle X\rangle\rangle$
in a subset $X$ of $D$. Then the elements of $O$ corresponding to $1+x$
generate a free group $Z$-algebra by a result of Fox, see Theorem~\ref{theo:MagnusFox} below.

When trying to do the same when the value group is a general ordered group $(G,<)$, the main obstruction
is the following situation that does not happen when the ordered group is a subgroup of the real numbers.
There may be elements $g,h\in G$, $g,h>0$, such that  $g>h^n$ for all 
positive integers $n$. We say that  $g\gg h$. If neither $g\gg h$ nor $h\gg g$, we say that $g\sim h$.  
That is why we have to impose the extra condition (3) in Theorem~\ref{coro:divisionrings}
and obtain a somehow different result. 
With this condition, we are able to reduce the proof to the previous case and obtain
Theorem~\ref{coro:divisionrings}.

\begin{theorem*}\label{coro:divisionrings}
Let $D$ be a division ring with prime subring $Z$. Let $(G,<)$ be an ordered group and $\upsilon\colon
D\rightarrow G\cup\{\infty\}$ be a nontrivial valuation. Let
$X$ be a subset of $D$ satisfying the following
four conditions.
\begin{enumerate}[\rm (1)]
\item The map $X\rightarrow\gr_\upsilon(D)$, $x\mapsto x+D_{>\upsilon(x)}$, is injective.
\item For each $x\in X$, $\upsilon(x)>0$.
\item For all $x,y\in X$, $\upsilon(x)\sim\upsilon(y)$.
\item The $Z_0$-subalgebra of $\gr_\upsilon(D)$ generated by the set
$\{x+D_{>\upsilon(x)}\}_{x\in X}$ is the free $Z_0$-algebra
on the set $\{x+D_{>\upsilon(x)}\}_{x\in X}$, where 
$Z_0\coloneqq Z_{\geq 0}/Z_{>0}\subseteq D_0$.
\end{enumerate}
Then, for any central subfield $k$, the following hold true.
\begin{enumerate}[\rm (a)]
\item If there does not exists $z\in Z$ such that
$\upsilon(z)\gg\upsilon(x)$ for all $x\in X$, then the
$k$-subalgebra of $D$ generated by $\{1+x,\, (1+x)^{-1}\}_{x\in
X}$ is the free group $k$-algebra on $\{1+x\}_{x\in X}$.
\item If there exists $z\in Z$ such that $\upsilon(z)\gg
\upsilon(x)$ for all $x\in X$, then the $k$-subalgebra of $D$
generated by $\{1+zx,\, (1+zx)^{-1}\}_{x\in X}$ is the free
group $k$-algebra on the set $\{1+zx\}_{x\in X}$. \qed
\end{enumerate}
\end{theorem*}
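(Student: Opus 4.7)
The strategy, as hinted in the discussion preceding the statement, is to reduce the general case to the $\mathbb{R}$-valued theorem just above, using condition~(3) to extract an archimedean quotient of the value group.

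Fix $x_0\in X$ and set $g_0=\upsilon(x_0)$. Let $A$ be the convex subgroup of $G$ generated by $g_0$ (which by~(3) contains every $\upsilon(x)$) and let $B$ be the largest convex subgroup of $G$ strictly contained in $A$ (so that no $\upsilon(x)$ lies in $B$). Then $A/B$ is archimedean and therefore embeds order-isomorphically into $(\mathbb{R},+,<)$ by H\"older's theorem. Consider the residue division ring $\tilde D=\mathcal{O}^A/\mathcal{M}^A$ obtained by coarsening $\upsilon$ by $A$: it carries an induced valuation $\tilde\upsilon\colon\tilde D\to A\cup\{\infty\}$, and further coarsening $\tilde\upsilon$ by $B$ produces a valuation $\bar\upsilon\colon\tilde D\to(A/B)\cup\{\infty\}\hookrightarrow\mathbb{R}\cup\{\infty\}$. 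Each $x\in X$ has $\upsilon(x)\in A\setminus B$, so maps to a nonzero element $\tilde x\in\tilde D$ with $\bar\upsilon(\tilde x)>0$.

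I would then verify the three hypotheses of the $\mathbb{R}$-valued theorem for $(\tilde D,\bar\upsilon,\tilde X)$. Positivity is immediate; injectivity of the graded map and freeness of the generated subalgebra transfer from conditions~(1) and~(4) via a comparison of each graded piece $\gr_{\bar\upsilon}(\tilde D)_\alpha$ ($\alpha\in A/B$) with the graded pieces $\gr_\upsilon(D)_g$ for $g\in\alpha$---a technical step that should belong to the paper's general framework on successive coarsenings of graded valuations. In case~(a), the assumption that no $z\in Z$ has $\upsilon(z)\gg\upsilon(x)$ for all $x$ means exactly that $Z\cap\mathcal{M}^A=0$, so $Z$ embeds in $\tilde D$ as the prime subring. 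The $\mathbb{R}$-valued theorem applied in $\tilde D$ then gives a free group $k$-algebra on $\{1+\tilde x\}_{x\in X}$ in $\tilde D$; since each $1+x$ is a unit in $D$ (as $\upsilon(1+x)=0$) and any nontrivial $k$-algebra relation in $D$ among the $(1+x)^{\pm1}$ would reduce modulo $\mathcal{M}^A$ to a nontrivial relation in $\tilde D$, the conclusion in~(a) follows.

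Case~(b) is handled by a scaling reduction to case~(a): choose $z\in Z$ with $\upsilon(z)\gg\upsilon(x)$ for all $x\in X$ and replace $X$ by $X'=\{zx\}_{x\in X}$. Since $z$ is central in $D$, multiplication by the nonzero homogeneous element $z+D_{>\upsilon(z)}$ of $\gr_\upsilon(D)$ is a graded, degree-shifting bijection, and conditions~(1)--(4) therefore transfer from $X$ to $X'$. Furthermore, for the prime subring $Z$ (either $\mathbb{Z}$ carrying a $p$-adic-type valuation, or $\mathbb{F}_p$), all nonzero values $\upsilon(z')$, $z'\in Z$, lie in a single archimedean class, so no $z'\in Z$ satisfies $\upsilon(z')\gg\upsilon(zx)$; case~(a) applied to $X'$ then yields the desired conclusion. \textbf{The main obstacle} is the transfer of condition~(4) through the two successive coarsenings, which requires the graded-piece comparison above together with a careful matching of the coefficient subring $Z_0\subset\gr_\upsilon(D)_0$ with its analogue $Z_0'\subset\gr_{\bar\upsilon}(\tilde D)_0$; a secondary point is ensuring the conclusion over an arbitrary central subfield $k$ of $D$ (which may not lie in $\mathcal{O}^A$), which the paper's general machinery should address.
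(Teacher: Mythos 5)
Your reduction is the same one the paper uses: the pair $(B,A)$ you construct is exactly the convex jump $(N,C)$ determined by $\upsilon(x_0)$, and the paper likewise passes to the subquotient ring carrying a filtration with values in $C/N\subseteq\mathbb{R}$ (it factors this through the intermediate Theorem~\ref{theo:freegroupalgebrafiltrationorderedgroup}, working with $O=\{f:\upsilon(f)\geq n\text{ for some }n\in N\}$ and $T=\{f:\upsilon(f)>c\text{ for all }c\in C\}$ rather than with your two successive coarsenings, but this is the identical idea). Your treatment of case (b) by scaling with $z$ and invoking Lemma~\ref{lem:valuationintegers} to see that all nonzero values on $Z$ lie in one archimedean class also matches the paper.

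However, the two points you defer are precisely where the proof's content lies, so as written the argument has a genuine gap. First, the transfer of hypothesis (4) to the coarsened filtration is not a formal consequence of a ``graded-piece comparison'': a single homogeneous component of $\gr_{\bar\upsilon}$ collects contributions from infinitely many $\upsilon$-degrees, and one must show that the lowest-$\upsilon$-degree part of a polynomial in the $x_i$ survives. The paper does this in the proof of Theorem~\ref{theo:freegroupalgebrafiltrationorderedgroup}: order the monomials $z_j\omega_j(x_1,\dotsc,x_n)$ by $\upsilon$-value, use freeness in $\gr_\upsilon(D)$ plus Lemma~\ref{lem:valuationsum}(3) to conclude $\upsilon\bigl(\sum_j z_j\omega_j(x)\bigr)=\upsilon(z_1\omega_1(x))$, and then push forward along $\pi\colon C\to C/N$. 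Moreover this step needs freeness over $\gr_\upsilon(Z)$ (coefficients $z_j$ of arbitrary value), not merely over $Z_0$ as hypothesis (4) gives; the paper upgrades (4) to $\gr_\upsilon(Z)$-freeness by localizing at the central set $\gr_\upsilon(Z)\setminus\{0\}$ and applying Lemma~\ref{lem:MalcolmsonMakarLimanov}. The same upgrade is what legitimizes your claim that multiplication by $\tilde z$ transfers (4) to $X'=\{zx\}$ in case (b), since $w(\tilde z\tilde x_1,\dotsc)=\tilde z^{\ell(w)}w(\tilde x_1,\dotsc)$ has coefficients in $\gr_\upsilon(Z)$, not in $Z_0$. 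Second, the reduction of a $k$-relation modulo $\mathcal{M}^A$ fails for a general central subfield $k$ (which need not lie in $\mathcal{O}^A$, and whose nonzero elements could lie in $\mathcal{M}^A$); the correct order of operations, as in the paper, is to prove the statement over the prime subring $Z$ (where $Z\cap\mathcal{M}^A=0$ makes your reduction valid), pass to the prime subfield, and only then invoke Lemma~\ref{lem:MalcolmsonMakarLimanov} to reach an arbitrary central $k$.
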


Note that if the value group is a subgroup of the reals, then condition (3)
is trivially satisfied and (b) never occurs, obtaining in this way the foregoing result.

In Sections~\ref{sec:generalitiespseudo}, \ref{sec:valuerealnumber}, \ref{sec:valuegeneralordered}
the results are stated for rings  endowed with  filtrations that not necessarily come
from  valuations. This is because  we realized that it is necessary to state the results for 
general rings (not only division rings) to make the reduction to the real case, and because 
 it is not much harder to prove similar theorems as the ones stated for these more general class of filtrations. 

In a second part of this work, we use the theorems stated to produce new results on the
existence of free group algebras in division rings.

\medskip

Our work is structured as follows. 
In Section~\ref{sec:preliminaries}, we fix the notation and present some results on the existence of free 
(group) algebras that will be needed in later sections. In Section~\ref{sec:generalitiespseudo},
the concept of positive filtration is introduced. Roughly speaking, it is a 
descending filtration on a $Z$-algebra $R$ with values on an ordered group.
It allows us to look at the induced graded ring $\gr(R)$ as a $\gr(Z)$-ring. Then we give
some of its basic but important properties. We end this section proving that the existence
of a free object (monoid, group, algebra, group algebra)  generated by 
homogeneous elements in the graded ring, implies the existence of the same kind of
free object in the original ring.
In Section~\ref{sec:valuerealnumber}, we give sufficient conditions for the existence
of a free group algebra in an algebra endowed with a positive filtration with values in a subgroup
of the real numbers. When the value group is a general ordered group, the same is shown in
Section~\ref{sec:valuegeneralordered} using the
results of previous sections. At the beginning of this section, the needed background
on ordered groups is recalled. In the last section, we show the main result
of our work and obtain an interesting corollary. In Appendix~\ref{sec:counterexample},
we give a counterexample to \cite[Proposition~4]{LichtmanmatrixringsII}. In Appendix~\ref{sec:corrigenda},
we comment on some results where \cite[Proposition~4]{LichtmanmatrixringsII} has been
used to prove the existence of a free group algebra.

	
\section{Preliminaries}\label{sec:preliminaries}

All rings and algebras $R$ are assumed to be associative rings with
an identity element $1\in R$.

For a commutative ring $Z$, when we say that a ring $A$ is a
\emph{$Z$-algebra} we mean that \emph{$Z$ is a subring of the center
of $A$}.

Let $Y=\{X_1,\dotsc,X_n\}$ be a finite nonempty set. The \emph{free
monoid} $Y^*$ generated by $Y$ is the set of finite sequences of
elements of $Y$ including the empty sequence denoted by $1$. The
elements of $Y^*$ are called \emph{words} (on $X_1,\dotsc,X_n$). The
product in $Y^*$ is the concatenation of words and with neutral
element $1$. A word $\omega\in Y^*$ is said to be a word of \emph{length}
$r$ if $\omega(X_1,\dotsc,X_n)=X_{i_1}\dotsb X_{i_r}$ where $i_j\in\{1,\dotsc,n\}$ .
The word $1$ is the only word of length $0$.

Let $Z$ be a commutative ring. A \emph{formal series} is a function
$$Y^*\rightarrow Z, \quad \omega\mapsto a_\omega,$$
denoted as $$\sum_{\omega\in Y^*} a_\omega \omega.$$ The
\emph{$Z$-algebra of formal series on $Y$ with coefficients in $Z$}
is denoted by $Z\langle\langle Y\rangle\rangle$ or $Z\langle\langle
X_1\dotsc,X_n\rangle\rangle$. If $$f=\sum_{\omega\in
Y^*}a_\omega\omega,\, g=\sum_{\omega\in Y^*}b_\omega\omega$$ are two
formal series, then their sum and product are given by
$$f+g=\sum_{\omega\in Y^*}(a_\omega+b_\omega)\omega,\quad
fg=\sum_{\omega\in Y^*}(\sum_{uv=\omega}a_ub_v)\omega.$$ There is an
injection $Z\rightarrow Z\langle\langle Y\rangle\rangle$, $z\mapsto
z 1$.

The \emph{free $Z$-algebra on the set $Y$}, denoted by $Z\langle
Y\rangle$, is the monoid $Z$-algebra of the free monoid $Y^*$. It
can be regarded as the set of series $f=\sum\limits_{\omega\in
Y^*}a_\omega\omega\in Z\langle\langle Y\rangle\rangle$ such that the
set $\{\omega\colon a_\omega\neq 0\}$ is finite with the sum and
product induced from $Z\langle\langle Y\rangle\rangle$.

Recall that the $Z$-algebra $Z\langle\langle Y\rangle\rangle$ can
also be seen as the completion $\varprojlim Z\langle Y\rangle/J^n$
where $J$ is the ideal of $Z\langle Y\rangle$ generated by $Y$ and
$J^n$ is the $n$-th power of $J$. Thus $J^n$ is the ideal generated
by the words of length $n$.

The \emph{free group $Z$-algebra on the set $Y$} is the group
$Z$-algebra $Z[G]$ where $G$ is the free group on the set $Y$.

Let $a_1,\dotsc,a_n$ be a sequence of elements of a ring (algebra,
group, monoid,\dots) $R$. Let $\omega\in Y^*$,
$\omega(X_1,\dotsc,X_n)=X_{i_1}\dotsb X_{i_r}$ where
$i_j\in\{1,\dotsc,n\}$. By $\omega(a_1,\dotsc,a_n)$, we mean
$$\omega(a_1,\dotsc,a_n)=a_{i_1}\dotsb a_{i_r}\in R.$$
That is, the evaluation of the word $\omega(X_1,\dotsc,X_n)$ on the
sequence $a_1,\dotsc,a_n$.

Sometimes, we will deal with free algebras and free group algebras where 
the ring of coefficients need not be commutative.
Let $S$ be a ring. By the \emph{free $S$-ring on $Y$}, we mean the
monoid $S$-ring os the free monoid $Y^*$. In other words  
the elements of $S$ are polynomials in the noncommuting variables of $Y$
with coefficients from $S$. Here, the coefficients are supposed to commute
with each $X_i$. Analaogously, one can define the \emph{free group $S$-ring on $Y$},
as the group ring $S[G]$ where $G$ is the free group on $Y$.

\bigskip

The main tool that we will use to obtain free group algebras inside
rings is the next result.

\begin{theo}\label{theo:MagnusFox}
Let $Y=\{X_1,\dotsc,X_n\}$ be a finite set,  $G$ be the free group
on $Y$ and $Z$ be a commutative ring.  Then $Z[G]$, the free group
$Z$-algebra on $Y$, embeds in $Z\langle\langle Y\rangle\rangle$ via
the $Z$-algebra homomorphism $\Psi\colon Z[G]\rightarrow
Z\langle\langle Y\rangle\rangle$ determined by $X_i\mapsto 1+X_i$,
$X_i^{-1}\mapsto (1+X_i)^{-1}.$ \qed
\end{theo}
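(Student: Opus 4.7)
The plan is to establish $\Psi$ as a well-defined $Z$-algebra homomorphism and then to prove injectivity via a filtration argument. For the first point, each $X_i$ lies in the two-sided ideal $J$ of $Z\langle\langle Y\rangle\rangle$ generated by $Y$, so $1+X_i$ is a unit in $Z\langle\langle Y\rangle\rangle^\times$ with inverse $\sum_{k\geq 0}(-X_i)^k$. The universal property of the free group $G$ on $Y$ then produces a unique group homomorphism $G \to Z\langle\langle Y\rangle\rangle^\times$ sending $X_i \mapsto 1+X_i$, and extending $Z$-linearly yields the required $Z$-algebra map $\Psi$.

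For injectivity, I would filter $Z[G]$ by powers of the augmentation ideal $I = \ker(\epsilon\colon Z[G]\to Z)$, which as a two-sided ideal is generated by $\{X_i - 1\}_i$, and filter $Z\langle\langle Y\rangle\rangle$ by powers of $J$. Since $\Psi(X_i - 1) = X_i \in J$, we obtain $\Psi(I^n) \subseteq J^n$ for all $n\geq 0$, and hence an induced graded map
\[
\gr(\Psi)\colon \gr_I Z[G] \longrightarrow \gr_J Z\langle\langle Y\rangle\rangle \cong Z\langle Y\rangle,
\]
sending the class of $X_i - 1$ in $I/I^2$ to the generator $X_i$ in degree one. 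A standard filtration argument then shows that if $\gr(\Psi)$ is injective and $\bigcap_n I^n = 0$, then $\Psi$ is injective: any $\alpha \in \ker\Psi$ lying in $I^n \setminus I^{n+1}$ would have nonzero image in $I^n/I^{n+1}$ sent by $\gr(\Psi)$ to the class of $\Psi(\alpha) = 0$ in $J^n/J^{n+1}$, a contradiction; hence $\alpha \in \bigcap_n I^n = 0$.

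The main obstacle is this joint statement: injectivity of $\gr(\Psi)$ together with $\bigcap_n I^n = 0$ in $Z[G]$. Together they constitute Magnus's classical theorem identifying the associated graded of the group ring of a free group with the free associative ring on the corresponding generators. I would settle it via Fox's free differential calculus: the derivations $\partial_i\colon Z[G]\to Z[G]$ characterized by $\partial_i(X_j) = \delta_{ij}$ and the twisted Leibniz rule $\partial_i(uv) = \partial_i(u)\epsilon(v) + u\partial_i(v)$ are intrinsically defined on $Z[G]$ for $G$ free, and iterated Fox differentiation followed by augmentation recovers the coefficients of the Magnus expansion $\Psi(\alpha)$. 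Consequently, if $\Psi(\alpha) = 0$ then every such iterated Fox derivative of $\alpha$ vanishes at $1$, forcing $\alpha = 0$ and completing the proof.
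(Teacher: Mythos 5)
The paper itself does not prove Theorem~\ref{theo:MagnusFox}; it quotes it from Fox's paper (Theorem~4.3 there, as reviewed by Ara--Dicks), so there is no internal proof to compare against. Your first paragraph, establishing that $\Psi$ is a well-defined $Z$-algebra homomorphism, is correct. The problem lies entirely in the injectivity argument.

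Your reduction of the injectivity of $\Psi$ to the two claims (i) the associated graded map $\operatorname{gr}(\Psi)\colon \bigoplus_n I^n/I^{n+1}\to Z\langle Y\rangle$ is injective and (ii) $\bigcap_n I^n=0$ in $Z[G]$ is logically sound, but you leave both claims essentially unproved, and each carries the full weight of the theorem. Claim (ii), the residual nilpotence of the augmentation ideal of $Z[G]$ over an arbitrary commutative ring $Z$, is in the standard literature \emph{deduced from} the injectivity of the Magnus map (via $\bigcap_n I^n\subseteq \Psi^{-1}\bigl(\bigcap_n J^n\bigr)=\Psi^{-1}(0)$), so invoking it as an input requires an independent proof that you do not supply; likewise claim (i) is the nontrivial identification of the associated graded of $Z[G]$ with the free associative $Z$-algebra. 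More seriously, the proposed resolution via Fox calculus is circular: the coefficient of a word $\omega$ in $\Psi(\alpha)$ \emph{is} the augmentation of the corresponding iterated Fox derivative of $\alpha$, so the assertion that ``$\Psi(\alpha)=0$ implies every iterated Fox derivative of $\alpha$ vanishes at $1$'' is a tautological restatement of the hypothesis, and the concluding phrase ``forcing $\alpha=0$'' is precisely the injectivity of $\Psi$ that is to be proved --- no argument for it is given. The genuine content of Fox's theorem, which your sketch never touches, is the $Z$-linear independence in $Z\langle\langle Y\rangle\rangle$ of the family $\{\Psi(g)\}_{g\in G}$ indexed by distinct reduced words $g$; some combinatorial argument (typically an induction on word length that exhibits a surviving coefficient in an extremal degree) must appear at this point, and it is exactly the step your proposal omits.
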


Theorem~\ref{theo:MagnusFox} goes back to R. Fox \cite[Theorem~4.3]{Foxembedding} who
stated the result for $Z=\mathbb{Z}$, but the proof of \cite{Foxembedding} works for any ring $Z$ (even not  commutative)
as noted in \cite[2.12]{AraDicks}. The proof of Fox's result was reviewed in \cite[Section~2]{AraDicks}.
Other proofs can be found in \cite{AraDicksII}, \cite{LichtmanmatrixringsII}.

\medskip

Suppose that $D$ is a division ring with prime subfield $P$. Let $C$
be a central subfield of $D$. The next result tells us that finding
free (group) $P$-algebras inside $D$ is equivalent to finding free
(group) $C$-algebras inside $D$. It will be used throughout the
paper.

\begin{lem}\label{lem:MalcolmsonMakarLimanov}
Let $D$ be a domain with prime subfield $P$ and 
suppose that $C$ is a  subfield of $D$ contained in the center of $D$. 
Let $M$ be a free submonoid
(subgroup) of the multiplicative group $D\setminus\{0\}$. Then the
algebra generated by $P$ and $M$ is the monoid (group) algebra
$P[M]$ if, and only if, the algebra generated by $C$ and $M$ is the
monoid (group) $C$-algebra $C[M]$. \qed
\end{lem}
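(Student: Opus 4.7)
The direction $(\Leftarrow)$ is a formality: because $P \subseteq C$, the inclusion yields an injection of monoid/group algebras $P[M] \hookrightarrow C[M]$, which composed with the embedding $C[M] \hookrightarrow D$ assumed on the right-hand side produces the embedding $P[M] \hookrightarrow D$.

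For the substantive direction $(\Rightarrow)$, suppose the evaluation $\varepsilon_P\colon P[M] \to D$ is injective but, aiming at a contradiction, that the evaluation $\varepsilon_C\colon C[M] \to D$ has nonzero kernel. Choose $0 \neq f = \sum_{i=1}^{n} c_i m_i \in \ker \varepsilon_C$ with $c_i \in C^\times$, the $m_i \in M$ pairwise distinct, and $n$ minimal. Since $D$ is a domain, $n \geq 2$, and we may rescale so that $c_1 = 1$. My plan is a commutator reduction carried out \emph{inside} $C[M]$: since $\ker \varepsilon_C$ is a two-sided ideal and the $c_i$ are central in $D$, for every $d \in C[M]$ we have
\[
[d, f] \;=\; \sum_{i=1}^{n} c_i \bigl(d m_i - m_i d\bigr) \in \ker \varepsilon_C.
\]
I would select $d \in M$ commuting with $m_1$ but not with some $m_j$, $j \geq 2$: in the free group case of rank at least two, $d$ can be taken in the cyclic centralizer of $m_1$ but outside that of $m_j$. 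With such a choice the $i = 1$ term vanishes and $[d, f] = \sum_{i=2}^{n} c_i(d m_i - m_i d)$ is a nonzero element of $\ker \varepsilon_C$ supported on the monomials $\{d m_i,\, m_i d : i \geq 2\} \subseteq M$. The aim is to conclude, after collecting like terms, that this new relation has strictly fewer than $n$ nonzero $M$-coefficients, contradicting the minimality of $n$.

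The main obstacle is the bookkeeping of the support: \emph{a priori} the set $\{d m_i,\, m_i d\}_{i \geq 2}$ has up to $2(n - 1)$ elements, and reducing it below $n$ requires controlling the ``conjugacy coincidences'' $d m_i = m_j d$, equivalent (in the group case) to the relation $m_j = d m_i d^{-1}$. The free normal form on $M$ is crucial here, guaranteeing unique factorisation and rigid, cyclic centralisers of nontrivial elements. Two situations will demand separate treatment: (i) the rank-one (cyclic) case, where the argument reduces to a statement on the algebraic dependence of a single element over $P$ and over $C$; and (ii) the submonoid (non-group) case, where the commutator $[d, f]$ is replaced by a suitable difference of left- and right-multiplications by elements of $M$, exploiting the cancellation property of free monoids.
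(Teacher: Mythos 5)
The paper does not actually prove this lemma: it is stated with a \verb|\qed| and attributed to \cite[Lemma~1]{MakarMalcolmson91} for the free-monoid case and to \cite{GoncalvesShirvani} for the free-group case, so there is no in-paper argument to compare yours with. On its own merits, your proposal settles only the trivial direction. In the forward direction the central step is missing, and it is not ``bookkeeping'': the commutator $[d,f]=\sum_{i\geq 2}c_i(dm_i-m_id)$ is supported on up to $2(n-1)$ elements of $M$, and $2(n-1)\geq n$ for every $n\geq 2$. Minimality of $n$ therefore yields no contradiction unless you prove that the support collapses to \emph{fewer than} $n$ monomials with nonzero coefficients. Generically there are no coincidences $dm_i=m_jd$ at all (each one forces $m_j=dm_id^{-1}$), and then $[d,f]$ is just another admissible relation of length $2(n-1)$, perfectly compatible with minimality. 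Producing the collapse is the entire content of the lemma, and no mechanism for it is given. There is also a problem with the existence of $d$: if all the $m_i$ lie in a common cyclic subgroup of $M$ (which can happen even when $M$ has rank $\geq 2$), then every $d\in M$ commuting with $m_1$ commutes with all the $m_j$, so $[d,f]=0$ and the reduction never starts.

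Moreover, the ``rank-one case'' you propose to treat separately cannot be treated: the statement is false there. Take $D=C=\mathbb{C}$, $P=\mathbb{Q}$, and let $M$ be the free monoid (or free group) generated by $\pi$. Then $P$ and $M$ generate the monoid algebra $\mathbb{Q}[M]\cong\mathbb{Q}[t]$, since $\pi$ is transcendental over $\mathbb{Q}$, while $C$ and $M$ generate $\mathbb{C}\neq\mathbb{C}[M]$. So the lemma is only true (and is only used in the paper) for noncyclic $M$, and any correct proof must exploit the existence of two free generators in an essential way --- for instance via the disjointness of supports of left multiples by distinct generators in a free monoid --- rather than quarantining rank one as a special case. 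I would recommend either reproducing the argument of \cite{MakarMalcolmson91} or explicitly adding the noncyclicity hypothesis and building the proof around it.
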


Lemma~\ref{lem:MalcolmsonMakarLimanov} was proved  for $M$ a free monoid in
\cite[Lemma~1]{MakarMalcolmson91}. The fact that a similar proof works
for   free groups $M$ was noted in
\cite{GoncalvesShirvani}.

\section{Generalities on positive filtrations}\label{sec:generalitiespseudo}

Throughout this work, although we deal with not necessarily abelian groups
$G$, the operation is denoted additively unless otherwise stated. Thus, in general, $g+h\neq
h+g$ for $h,g\in G$. The identity element of $G$ is denoted by $0$
and the inverse of an element $g\in G$ by $-g$.

A \emph{strict ordering} on a set $S$ is a binary relation $<$ which is transitive and such that
$s_1<s_2$ and $s_2<s_1$ cannot both hold for elements $s_1,s_2\in S$. It is a \emph{strict total
ordering} if for every $s_1,s_2\in S$ exactly one of $s_1<s_2$, $s_2<s_1$ or $s_1=s_2$ holds.

A group $G$ is called \emph{orderable} if its elements can be given a strict total ordering $<$
which is left and right invariant. That is, $g_1<g_2$ implies that $g_1+h<g_2+h$ and $h+g_1<h+g_2$ for all $g_1,g_2, h\in G$.
We call the pair $(G,<)$ an \emph{ordered group}.
Clearly, any additive subgroup of the real numbers is orderable. More generally, torsion-free abelian groups,
torsion-free nilpotent groups and residually torsion-free nilpotent groups are orderable \cite{Fuchs}.

Let $(G,<)$ be an ordered group. A symbol $\infty$ is adjoined to
$G$ and in $G_\infty=G\cup\{\infty\}$ the operation and order are
extended in such a way that $g+\infty=\infty+g=\infty+\infty=\infty$
and $g<\infty$, for all $g\in G$.

Let $R$ be a ring and let $(G,<)$ be an ordered group.  A
\emph{valuation} on $R$ with values in $G$ is a map $\upsilon\colon
R\rightarrow G_\infty$ that satisfies the following conditions for
all $x,y\in R$,
\begin{enumerate}[\hspace{0.5cm}(V.1)]
\item $\upsilon(x)=\infty$ if and only if $x=0$;
\item $\upsilon(x+y)\geq \min\{\upsilon(x),\upsilon(y)\}$;
\item $\upsilon(xy)= \upsilon(x)+\upsilon(y)$.
\end{enumerate}
If $\upsilon(x)=0$ for all $x\in R\setminus\{0\}$, we say that
$\upsilon$ is the \emph{trivial valuation} on $R$.

It is well known that $\upsilon(1)=\upsilon(-1)=0,$ $\upsilon(x)=\upsilon(-x)$ for any $x\in R$
and $\upsilon(x+y)=\min\{\upsilon(x), \upsilon(y)\}$ provided $x,y\in R$ with $\upsilon(x)\neq \upsilon(y)$.  

Notice that if $R$ is a nonzero ring, then it is a domain by condition (V.3).
Thus the characteristic of $R$ is either zero or a prime number $p$.
Let $Z$ be the \emph{prime subring} of $R$, that is, the subring of
$R$ generated by $1$. If the characteristic of $R$ is  a prime number $p$, then
$Z$ can be identified with $\mathbb{Z}/p\mathbb{Z}$, and the
restriction of $\upsilon$ to $Z$ is the trivial valuation on $Z$.
When the characteristic of $R$ is zero, $Z$ can be identified with
the ring of integers $\mathbb{Z}$ and  the restriction of $\upsilon$ to
$Z$ is explained in Lemma~\ref{lem:valuationintegers}.

\begin{lem}\label{lem:valuationintegers}
Let $(G,<)$ be an ordered group, and $\upsilon\colon
\mathbb{Z}\rightarrow G_\infty$ be a nontrivial valuation.
Then there exist a prime $p\in\mathbb{Z}$ and $g\in G$, $g>0$, such
that \begin{equation}\label{eq:valuationintegers} \upsilon
(z)=rg, \textrm{ for each } z\in\mathbb{Z}\setminus\{0\}, \end{equation} where $z$ is uniquely decomposed as 
$z=p^rb$ with  $r\in\mathbb{N}$ and $b$  an integer coprime with $p$.

Conversely, for each prime $p\in\mathbb{Z}$ and $g\in G$, $g>0$, there
exists a valuation $\upsilon\colon\mathbb{Z}\rightarrow G_\infty$
defined as in \eqref{eq:valuationintegers}.
\end{lem}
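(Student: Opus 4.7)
The plan is to exploit that $\mathbb{Z}$ is a principal ideal domain, so a nontrivial valuation on $\mathbb{Z}$ is essentially determined by a single prime ideal and the value assigned to a generator. First I would show that $\upsilon(n)\geq 0$ for every $n\in\mathbb{Z}\setminus\{0\}$: since $\upsilon(1)=\upsilon(-1)=0$, an easy induction using (V.2) applied to $n=\pm(1+\cdots+1)$ yields $\upsilon(n)\geq 0$. Then I would introduce
\[
\mathfrak{p}=\{n\in\mathbb{Z}\colon \upsilon(n)>0\}
\]
and verify that $\mathfrak{p}$ is a prime ideal of $\mathbb{Z}$: closure under addition follows from (V.2); absorption under multiplication by arbitrary integers uses (V.3) together with the nonnegativity of $\upsilon$ on $\mathbb{Z}$; and primeness is a direct consequence of $\upsilon(ab)=\upsilon(a)+\upsilon(b)$ together with the fact that a strictly positive sum of two nonnegative elements of $(G,<)$ forces at least one of the summands to be positive. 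Moreover $\mathfrak{p}$ is nonzero since $\upsilon$ is assumed nontrivial and takes nonnegative values on $\mathbb{Z}$. Hence $\mathfrak{p}=p\mathbb{Z}$ for a unique prime $p$.

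With this in hand, I would set $g\coloneqq\upsilon(p)$, which is positive by construction. For any nonzero integer $z=p^r b$ with $\gcd(b,p)=1$, axiom (V.3) gives $\upsilon(z)=r\upsilon(p)+\upsilon(b)$; since $b\notin p\mathbb{Z}=\mathfrak{p}$ we have $\upsilon(b)=0$, and so $\upsilon(z)=rg$, proving \eqref{eq:valuationintegers}.

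The converse is purely formal. Given a prime $p$ and $g>0$ in $G$, I define $\upsilon$ by the formula and check (V.1)--(V.3) directly from unique factorization. Multiplicativity is immediate. For the ultrametric inequality, given two nonzero integers $z,z'$ with $p$-adic exponents $r\leq r'$, I factor $z+z'=p^r c$ for some $c\in\mathbb{Z}$; either $c=0$, giving $\upsilon(z+z')=\infty$, or $c=p^s b$ with $s\geq 0$ and $\gcd(b,p)=1$, which gives $\upsilon(z+z')=(r+s)g\geq rg=\min\{\upsilon(z),\upsilon(z')\}$ thanks to $g>0$. I do not anticipate any serious obstacle; the only point that merits care is the verification that $\mathfrak{p}$ is a proper nonzero prime ideal, which depends simultaneously on the nontriviality hypothesis and on the preliminary observation that $\upsilon$ is nonnegative on $\mathbb{Z}$.
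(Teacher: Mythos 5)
Your proof is correct, and it reaches the conclusion by a route that differs from the paper's in two places. For nonnegativity of $\upsilon$ on $\mathbb{Z}$, the paper first extends the valuation to $\mathbb{Q}$ (citing this as well known) and deduces $\upsilon(1/q)\leq 0$ from $0=\upsilon(1/q+\dotsb+1/q)\geq\upsilon(1/q)$, whence $\upsilon(q)\geq 0$ for every prime $q$; your induction on $n=\pm(1+\dotsb+1)$ using (V.2) gets the same conclusion without leaving $\mathbb{Z}$, which is more self-contained. For the uniqueness of the distinguished prime, the paper writes down an explicit B\'ezout identity $ap+bq=1$ and applies (V.2) to force $\upsilon(q)=0$ whenever $\upsilon(p)\neq 0$; you instead observe that $\mathfrak{p}=\{n\colon\upsilon(n)>0\}$ is a proper nonzero prime ideal of the PID $\mathbb{Z}$, hence $p\mathbb{Z}$ for a unique prime $p$. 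The two arguments rest on the same arithmetic fact (coprime integers generate the unit ideal), but your packaging makes the structural reason transparent and generalizes verbatim to any PID, while the paper's version is more elementary in that it avoids ideal-theoretic language. Your verification of the converse, including the case analysis on $z+z'=p^rc$ with $c=0$ or $c=p^sb$, is exactly the routine check the paper leaves to the reader. The one point you rightly flag as needing care, that $\mathfrak{p}$ is proper ($1\notin\mathfrak{p}$ since $\upsilon(1)=0$) and nonzero (nontriviality plus nonnegativity give some $n\neq 0$ with $\upsilon(n)>0$), is handled correctly.
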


\begin{proof}
Clearly, if $\upsilon(q)=0$ for all prime numbers $q\in\mathbb{Z}$,
then $\upsilon(z)=0$ for all $z\in\mathbb{Z}$.

It is well known that any valuation on $\mathbb{Z}$ can be uniquely
extended to a valuation on $\mathbb{Q}$.  Since
$$0=\upsilon(1)=\upsilon(\frac{q}{q})=\upsilon(\frac{1}{q}+\dotsb+\frac{1}{q})
\geq \upsilon(\frac{1}{q}),$$ then $\upsilon(q)>0$ for any prime $q$
such that $\upsilon(q)\neq 0$. Hence $\upsilon(z)\geq 0$ for any
$z\in\mathbb{Z}$.

Let $p$ and $q$ be two different prime numbers and suppose that $\upsilon(p)\neq 0$. Then there exist
integers $a,b$ such that $ap+bq=1$. Since
$$0=\upsilon(1)=\upsilon(ap+bq)\geq \min\{\upsilon(ap),\upsilon(bq)\}\geq\min\{\upsilon(p),\upsilon(q)\},$$
then $\upsilon(q)=0$. Hence there exists at
most one prime number $p$ such that $\upsilon(p)>0$.

Given an integer $z$, there exists a decomposition of $z$ of the
form $z=p^rb$ with $r\geq 0$ and $b\in\mathbb{Z}$ coprime with $p$.
Hence, if $\upsilon(p)=g$, $\upsilon(z)=rg$, as desired.

Conversely, if $p$ is a prime number, and $g\in G$, $g>0$, it is
easy to check that $\eqref{eq:valuationintegers}$ defines a
valuation.
\end{proof}

Let $Z$ be a commutative ring and $R$ be a $Z$-algebra. Let $(G,<)$
be an ordered group. A \emph{positive filtration} (over $Z$) with
values in $G$ is a map $\upsilon\colon R\rightarrow G_\infty$ that
satisfies the following conditions for all $x,y\in R$, $a\in Z$
\begin{enumerate}[\hspace{0.5cm}(PF.1)]
\item $\upsilon(x)=\infty$ if and only if $x=0$;
\item $\upsilon(x+y)\geq \min\{\upsilon(x),\upsilon(y)\}$;
\item $\upsilon(xy)\geq \upsilon(x)+\upsilon(y)$;
\item $\upsilon(a)\geq 0$.
\end{enumerate}

\medskip

The following lemmas present some trivial properties about
positive filtrations that will be used throughout without any further reference.

\begin{lem}\label{lem:positivefiltrations}
Let $Z$ be a commutative ring and $R$ be a $Z$-algebra. Let $(G,<)$
be an ordered group and $\upsilon \colon R\rightarrow G_\infty$ be a positive filtration.
The following assertions hold true.
\begin{enumerate}[\rm(1)]
	\item $\upsilon(1)=\upsilon(-1)=0$.
	\item $\upsilon(x)=\upsilon(-x)$ for all $x\in R$.
	\item $\upsilon(x+y)=\min\{\upsilon(x),\upsilon(y)\}$ provided $\upsilon(x)\neq\upsilon(y)$.
	\end{enumerate}
\end{lem}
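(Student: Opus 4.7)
The plan is to deduce the three assertions in order, using only the axioms (PF.1)--(PF.4), since (1) feeds into (2) and (2) feeds into (3), exactly as in the classical valuation case. The only subtlety is that (PF.3) is an inequality rather than an equality, so one has to make sure not to use multiplicativity.

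For (1), I would apply (PF.3) to the product $1 \cdot 1 = 1$ to get $\upsilon(1) \geq \upsilon(1)+\upsilon(1)$, which in the ordered group $G$ cancels to $\upsilon(1) \leq 0$; combined with $\upsilon(1) \geq 0$ coming from (PF.4) applied to $1 \in Z$, this forces $\upsilon(1)=0$. The identical maneuver on $(-1)(-1)=1$ gives $0 = \upsilon(1) \geq 2\upsilon(-1)$, so $\upsilon(-1) \leq 0$, and since $-1 \in Z$ we again have $\upsilon(-1) \geq 0$ from (PF.4), yielding $\upsilon(-1)=0$.

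For (2), I would write $-x = (-1)\cdot x$ and apply (PF.3) together with the equality $\upsilon(-1)=0$ obtained in (1) to get $\upsilon(-x) \geq \upsilon(x)$; the reverse inequality $\upsilon(x) \geq \upsilon(-x)$ follows by symmetry from $x = (-1)(-x)$. For (3), assume without loss of generality $\upsilon(x) < \upsilon(y)$. Axiom (PF.2) gives $\upsilon(x+y) \geq \upsilon(x)$, and for the reverse inequality I would exploit the decomposition $x = (x+y) + (-y)$: applying (PF.2) and part (2) yields $\upsilon(x) \geq \min\{\upsilon(x+y),\upsilon(y)\}$. If the minimum on the right were $\upsilon(y)$, we would obtain $\upsilon(x) \geq \upsilon(y)$, contradicting our hypothesis; hence the minimum must be $\upsilon(x+y)$, giving $\upsilon(x) \geq \upsilon(x+y)$ and therefore equality.

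The main obstacle here is essentially nonexistent: everything is routine manipulation of the axioms, and the only technical care needed is to use cancellation in the ordered group $G$ (which is legitimate, since $(G,<)$ is a totally ordered group and left/right translations preserve order) and to be vigilant that (PF.3) is only a one-sided inequality, so it can only be used to bound $\upsilon$ of a product from below.
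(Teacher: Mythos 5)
Your proof is correct and follows exactly the paper's argument: (1) via (PF.4) plus (PF.3) applied to $1\cdot 1$ and $(-1)(-1)$ with cancellation in $G$, (2) via $-x=(-1)x$, and (3) via the standard valuation-theoretic decomposition $x=(x+y)+(-y)$, which the paper simply cites as "the same proof as for valuations." No gaps.
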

\begin{proof}
(1) Notice that $1,-1\in Z$, thus $\upsilon(1),\upsilon(-1)\geq 0$. Now
\begin{equation*}
\begin{split}
\upsilon(1)=\upsilon(1\cdot 1) &\geq \upsilon(1)+\upsilon(1) \\
\upsilon(1)=\upsilon((-1)(-1)) & \geq  \upsilon(-1)+\upsilon(-1).
\end{split}
\end{equation*}
The first inequality shows that $\upsilon(1)\leq 0$, and therefore $\upsilon(1)=0$.
Using this, the second inequality shows that $\upsilon(-1)\leq 0$, and therefore 
$\upsilon(-1)=0$.

(2) Now $\upsilon(-x)=\upsilon((-1)x)\geq \upsilon(-1)+\upsilon(x)=\upsilon(x)$. Similarly
$\upsilon(x)\geq \upsilon(-x)$. Hence $\upsilon(x)=\upsilon(-x)$.

(3) The same proof as for valuations works.
\end{proof}

\begin{lem}
A valuation $\upsilon\colon R\rightarrow G_\infty$ is a positive filtration (over
the prime subring $Z$ of $R$). 
\end{lem}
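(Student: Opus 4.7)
The plan is simply to verify each of the four axioms (PF.1)--(PF.4) in turn. Conditions (PF.1) and (PF.2) coincide verbatim with (V.1) and (V.2), so there is nothing to check. Condition (PF.3) is weaker than (V.3): the equality $\upsilon(xy)=\upsilon(x)+\upsilon(y)$ certainly implies the inequality $\upsilon(xy)\geq\upsilon(x)+\upsilon(y)$. Hence the only content of the lemma lies in verifying (PF.4), namely that $\upsilon(a)\geq 0$ for every $a$ in the prime subring $Z$ of $R$.

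For (PF.4), I would split into the two possibilities for the characteristic of $R$, already discussed in the paragraph preceding Lemma~\ref{lem:valuationintegers}. If $\mathrm{char}(R)=p$ is a prime, then $Z$ is identified with $\mathbb{Z}/p\mathbb{Z}$, which is a finite field. Every nonzero element is a unit, so for $a\in Z\setminus\{0\}$ one has $\upsilon(a)+\upsilon(a^{-1})=\upsilon(1)=0$ and, since both summands are finite, the same argument applied to both $a$ and $a^{-1}$ forces $\upsilon(a)=0$; together with $\upsilon(0)=\infty$ this yields $\upsilon(a)\geq 0$ for all $a\in Z$. (Equivalently, the restriction of $\upsilon$ to $Z$ is the trivial valuation.) If instead $\mathrm{char}(R)=0$, then $Z\cong\mathbb{Z}$, and the restriction of $\upsilon$ to $Z$ is a valuation on $\mathbb{Z}$; by Lemma~\ref{lem:valuationintegers}, this restriction is either trivial or of the form $\upsilon(p^{r}b)=rg$ for some prime $p$, some $g>0$ in $G$, and $r\in\mathbb{N}$. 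In every case the values on $Z$ lie in $G_{\geq 0}\cup\{\infty\}$, establishing (PF.4).

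There is no real obstacle here: the first three axioms are immediate, and the fourth is exactly the content of the preceding analysis of $\upsilon|_Z$ that was carried out to motivate Lemma~\ref{lem:valuationintegers}. The lemma therefore essentially repackages that analysis into the language of positive filtrations.
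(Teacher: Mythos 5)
Your overall route is the same as the paper's: (PF.1)--(PF.3) are immediate from (V.1)--(V.3), and (PF.4) reduces to the analysis of $\upsilon|_Z$, with Lemma~\ref{lem:valuationintegers} handling the nontrivial characteristic-zero case. The one place where your argument as written does not go through is the prime-characteristic subcase: from $\upsilon(a)+\upsilon(a^{-1})=\upsilon(1)=0$ you can only conclude $\upsilon(a^{-1})=-\upsilon(a)$, not $\upsilon(a)=0$, and ``applying the same argument to $a^{-1}$'' yields nothing new because the relation is symmetric in $a$ and $a^{-1}$. The standard repair is to use torsion: every nonzero $a\in\mathbb{Z}/p\mathbb{Z}$ satisfies $a^{p-1}=1$, so $(p-1)\upsilon(a)=\upsilon(a^{p-1})=\upsilon(1)=0$, and an ordered group is torsion-free (if $g>0$ then $ng>0$ for all $n\geq 1$), whence $\upsilon(a)=0$. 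The paper sidesteps this point by asserting, in the paragraph preceding Lemma~\ref{lem:valuationintegers}, that $\upsilon|_Z$ is trivial in prime characteristic, and by organizing its case split as ``either $\upsilon|_Z$ is trivial, or it is nontrivial and hence the characteristic is zero, so Lemma~\ref{lem:valuationintegers} applies.'' With the torsion-freeness observation inserted, your proof is complete and is essentially the paper's.
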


\begin{proof}
Clearly (PF.1), (PF.2) and (PF.3) hold    . 
If $\upsilon(a)$=0 for all nonzero $a\in Z$, then
(PF.4) trivially satisfied. 
If the characteristic of $R$ is zero and $\upsilon$
is not the trivial valuation when restricted to $Z$, Lemma~\ref{lem:valuationintegers} 
implies (PF.4). 
\end{proof}

\medskip

The following procedure to obtain the induced graded ring from
a positive filtration is well known. 
 Let $Z$ be a commutative ring and $R$ be a $Z$-algebra. Let
$(G,<)$ be an ordered group and $\upsilon\colon R\rightarrow
G_\infty$ be a positive filtration. For each $g\in G$, define
$$R_{\geq g}=\{x\in R\colon \upsilon (x)\geq g\}, \quad
R_{>g}=\{x\in R\colon \upsilon(x)>g\}.$$ 
By (PF.2) and Lemma~\ref{lem:positivefiltrations}, $R_{\geq g}$ and $R_{>g}$ are abelian groups.
Define
$$R_g=R_{\geq g}/R_{>g}.$$  The fact that $G$ is an ordered group and (PF.3) imply that 
$$R_{\geq g}R_{\geq h}\subseteq R_{\geq g+h},\ R_{>g}R_{>h}\subseteq
R_{>g+h},\
 R_{>h}R_{\geq g}\subseteq R_{>g+h},\ R_{\geq g}R_{>h}\subseteq
R_{>g+h}$$ for any
$g,h\in G$. Thus a multiplication can be defined by
\begin{equation}\label{eq:filtrationmultiplication}
R_g\times R_h\longrightarrow R_{g+h},\quad
(x+R_{>g})(y+R_{>h})=(xy)+R_{>g+h}.
\end{equation}
The \emph{associated graded ring} of $\upsilon$ on $R$ is defined to
be
$$\gr_\upsilon(R)=\bigoplus_{g\in G} R_g.$$ The addition on $\gr_\upsilon(R)$ arises
from the addition on each component $R_g$. The multiplication is
defined by extending the multiplication
\eqref{eq:filtrationmultiplication} on the components bilinearly to
all $\gr_\upsilon(R)$. Note that $1+R_{>0}$ is the identity element
of $\gr_\upsilon(R)$.

Consider the restriction of $\upsilon$ to $Z$. Then $\gr_\upsilon(Z)$
can be regarded as a  subring of $\gr_\upsilon(R)$. Since, in general,
$\upsilon(a)+\upsilon(x)\neq \upsilon(x)+\upsilon(a)$ for $a\in Z$, $x\in R$, we cannot
consider $\gr_\upsilon(R)$ as a $\gr_\upsilon(Z)$-algebra. If the group $G$ is abelian,
then we can consider $\gr_\upsilon(R)$ as a $\gr_\upsilon(Z)$-algebra. 
On the other hand, whether $G$ is abelian or not,
$\gr_\upsilon(R)$ is always $Z_0$-algebra, where
$Z_0=Z_{\geq 0}/Z_{>0}$.

In the next lemma, Lemma~\ref{lem:valuationsum}(2) can be found in \cite[Proposition~2.6.1]{Cohnskew}.
The other statements are straightforward. This result will be used throughout, sometimes without further reference.

\begin{lem}\label{lem:valuationsum}
Let $Z$ be a commutative ring and $R$ be a $Z$-algebra. Let $(G,<)$
be an ordered group and $\upsilon\colon R\rightarrow G_\infty$ be a
positive filtration.  Let $f_1,f_2,\dotsc,f_r\in R$.
\begin{enumerate}[\rm(1)]
\item  $(f_1+R_{>\upsilon(f_1)})(f_2+R_{>\upsilon(f_2)})\dotsb (f_r+R_{>\upsilon(f_r)})\neq 0$
if, and only if, $\upsilon(f_1f_2\dotsb f_r)=\upsilon(f_1)+\upsilon(f_2)+\dotsb+\upsilon(f_r)$.

\item $\gr_\upsilon(R)$ is a domain, if and only if, $\upsilon$ is a valuation.

\item Suppose that
$\upsilon(f_1)=\dotsb=\upsilon(f_r)=g\in G$. The following statements are
equivalent
\begin{enumerate}[\rm(a)]
\item $\upsilon(f_1+\dotsb+f_r)=g$,
\item $(f_1+R_{>g})+\dotsb+(f_r+R_{>g})=(f_1+\dotsb+f_r)+R_{>g}\neq
0$ as an element of $R_g$. \qed
\end{enumerate}
\end{enumerate}
\end{lem}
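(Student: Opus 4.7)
The plan is to dispatch the three statements separately; (1) and (3) are essentially direct unpackings of the definitions in $\gr_\upsilon(R)$, while (2) requires a standard leading-term argument once (1) is in hand.

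For (1), I would start from the multiplication formula \eqref{eq:filtrationmultiplication} and iterate it: setting $g_i=\upsilon(f_i)$, one obtains
\[
(f_1+R_{>g_1})(f_2+R_{>g_2})\dotsb(f_r+R_{>g_r})=f_1f_2\dotsb f_r+R_{>g_1+\dotsb+g_r}.
\]
By (PF.3) one always has $\upsilon(f_1\dotsb f_r)\geq g_1+\dotsb+g_r$, so the coset on the right vanishes precisely when the inequality is strict, and is nonzero precisely when equality holds. For (3), I would unpack the sum in $R_g=R_{\geq g}/R_{>g}$ analogously: the sum of the cosets $f_i+R_{>g}$ is $(f_1+\dotsb+f_r)+R_{>g}$, whose nonvanishing is $f_1+\dotsb+f_r\notin R_{>g}$; combined with the a priori bound $\upsilon(f_1+\dotsb+f_r)\geq g$ coming from (PF.2), this is equivalent to equality.

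For (2), the ``only if'' direction is immediate from (1): if $\gr_\upsilon(R)$ is a domain and $f_1,f_2\in R$ are nonzero, the cosets $f_i+R_{>\upsilon(f_i)}$ are nonzero homogeneous elements, so their product is nonzero, and (1) forces $\upsilon(f_1f_2)=\upsilon(f_1)+\upsilon(f_2)$, which is exactly (V.3). For the converse, I would assign to every nonzero $a=\sum_g a_g\in\gr_\upsilon(R)$ its \emph{leading degree}, the minimum $g\in G$ with $a_g\neq 0$ (well defined because only finitely many components are nonzero and $G$ is totally ordered). Given nonzero $a,b$ of leading degrees $g_0,h_0$ and leading terms $a_{g_0},b_{h_0}$, I would show that the component of $ab$ in degree $g_0+h_0$ is precisely $a_{g_0}b_{h_0}$, which is nonzero by (1) applied to any representatives, whence $ab\neq 0$.

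The one spot requiring a little care -- the main obstacle, as it were -- is the claim that the only pair $(g,h)$ contributing to degree $g_0+h_0$ in $ab$ is $(g_0,h_0)$. This uses strict left and right invariance of $<$: if $g\geq g_0$, $h\geq h_0$ and $g+h=g_0+h_0$, then $g>g_0$ would give $g+h>g_0+h\geq g_0+h_0$, a contradiction, so $g=g_0$, and symmetrically $h=h_0$. This is the only place where the full ordered-group hypothesis on $G$ is genuinely invoked, and it works without assuming $G$ abelian.
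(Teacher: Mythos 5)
Your proposal is correct and is exactly the standard argument the paper has in mind: the paper itself omits the proof, calling (1) and (3) straightforward and referring to Cohn's Proposition~2.6.1 for (2), and your leading-term argument for the converse of (2) — including the observation that left and right invariance of the order isolates the pair $(g_0,h_0)$ without assuming $G$ abelian — is precisely the point that needs checking there. Nothing is missing.
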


We end this section with the following result. We prove that the existence of a
free object generated by homogeneous elements in the graded ring   
implies the existence of a free object of the same kind inside the ring $R$.

\begin{prop}\label{prop:freeobjecthomogeneous}
Let $Z$ be a commutative ring and $R$ be a $Z$-algebra. Let $(G,<)$ be an ordered group and
$\upsilon\colon R\rightarrow G_\infty$ be a positive filtration. Let $X$
be a subset of elements of $R$ such that
the map $X\rightarrow \gr_\upsilon(R)$, $x\mapsto x+R_{>\upsilon(x)}$,
is injective. The following statements hold true.
\begin{enumerate}[\rm(1)]
\item If the multiplicative submonoid of $\gr_\upsilon(R)$ generated by $\{x+R_{>\upsilon(x)}\}_{x\in
X}$   is the free monoid on $\{x+R_{>\upsilon(x)}\}_{x\in X}$,
then the multiplicative submonoid of $R$ generated by $X$ is the free
submonoid on $X$.
\item Suppose that:
\begin{enumerate}[\rm(a)]
\item The elements of $X$ are invertible in $R$ and $\upsilon(x^{-1})=-\upsilon(x)$
for all $x\in X$;
\item The multiplicative monoid of $\gr_\upsilon(R)$ generated by
$\{x+R_{>\upsilon(x)},\, x^{-1}+R_{>\upsilon(x^{-1})}\}_{x\in X}$ is the free group on
$\{x+R_{>\upsilon(x)}\}_{x\in X}$.
\end{enumerate}
Then the multiplicative subgroup of $R$
generated by $X$ is the free group on $X$.
\item If the subring of $\gr_\upsilon(R)$ generated by $\gr_\upsilon(Z)$ and $\{x+R_{>\upsilon(x)}\}_{x\in
X}$   is the free $\gr_\upsilon(Z)$-ring on $\{x+R_{>\upsilon(x)}\}_{x\in X}$,
then the  $Z$-subalgebra of $R$ generated by $X$ is the free $Z$-algebra on
$X$.
\item Suppose that 
\begin{enumerate}[\rm(a)]
\item The elements of $X$ are invertible in $R$ and $\upsilon(x^{-1})=-\upsilon(x)$
for all $x\in X$;
\item The subring of $\gr_\upsilon(R)$ generated by $\gr_\upsilon(Z)$ and
$\{x+R_{>\upsilon(x)},\, x^{-1}+R_{>\upsilon(x^{-1})}\}_{x\in X}$
is the free group $\gr_\upsilon(Z)$-ring on $\{x+R_{>\upsilon(x)}\}_{x\in
X}$.
\end{enumerate}
Then the $Z$-subalgebra of $R$ generated by $\{x,\,
x^{-1}\}_{x\in X}$ is the free group $Z$-algebra on $X$.
\end{enumerate}
If, moreover, $\upsilon$ is a valuation, one can change {\rm(3)} and {\rm(4)}, respectively, for
\begin{enumerate}
\item[\rm(3')] If the $Z_0$-subalgebra of $\gr_\upsilon(R)$ generated by $\{x+R_{>\upsilon(x)}\}_{x\in
X}$   is the free $Z_0$-algebra on $\{x+R_{>\upsilon(x)}\}_{x\in X}$,
then the  $Z$-subalgebra of $R$ generated by $X$ is the free $Z$-algebra on
$X$.
\item[\rm(4')] Suppose that:
\begin{enumerate}[\rm(a)]
\item The elements of $X$ are invertible.

\item The $Z_0$-subalgebra of $\gr_\upsilon(R)$ generated by
$\{x+R_{>\upsilon(x)},\, x^{-1}+R_{>\upsilon(x^{-1})}\}_{x\in X}$
is the free group $Z_0$-algebra on $\{x+R_{>\upsilon(x)}\}_{x\in
X}$.
\end{enumerate}
\end{enumerate}
\end{prop}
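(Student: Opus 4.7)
\medskip

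The unifying idea for all six assertions is to argue by contradiction: any purported nontrivial relation in $R$ is transported to a nontrivial homogeneous relation in $\gr_\upsilon(R)$ among the leading images $\bar x\coloneqq x+R_{>\upsilon(x)}$, contradicting the freeness hypothesis on the graded side. The workhorse is Lemma~\ref{lem:valuationsum}(1), which says that the leading image of a product equals the product of the leading images of its factors precisely when the valuation of the product is the sum of the valuations.

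For the monoid case (1), given an equality of words $\omega(x_{i_1},\dotsc,x_{i_r})=\omega'(x_{j_1},\dotsc,x_{j_s})$ in $R$, the free-monoid hypothesis on the graded side makes the two products $\bar x_{i_1}\dotsb\bar x_{i_r}$ and $\bar x_{j_1}\dotsb\bar x_{j_s}$ nonzero, so Lemma~\ref{lem:valuationsum}(1) gives $\upsilon(\omega(x))=\sum_k\upsilon(x_{i_k})$ (and similarly for $\omega'$) and identifies each leading image of a product with the corresponding product of $\bar x$'s. Equality in $R$ thus descends to equality of these two products in the free monoid on $\{\bar x\}_{x\in X}$, forcing $\omega=\omega'$ as formal words. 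Case (2) proceeds identically, once hypothesis (2a) combined with Lemma~\ref{lem:valuationsum}(1) applied to $xx^{-1}=1$ shows that $\overline{x^{-1}}$ is the genuine multiplicative inverse of $\bar x$ in $\gr_\upsilon(R)$; reduced words in $X\cup X^{-1}$ therefore remain reduced after passing to the graded side.

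For the algebra case (3), suppose $\sum_\omega a_\omega\,\omega(x)=0$ holds nontrivially in $R$ with $a_\omega\in Z$ finitely supported. For $\omega=X_{i_1}\dotsb X_{i_r}$ set $g_\omega\coloneqq\upsilon(x_{i_1})+\dotsb+\upsilon(x_{i_r})$ and define
$$
g\coloneqq\min\bigl\{\upsilon(a_\omega)+g_\omega : a_\omega\neq 0\bigr\},\qquad S_g\coloneqq\bigl\{\omega : \upsilon(a_\omega)+g_\omega=g\bigr\};
$$
the minimum exists because the relevant set is finite and $(G,<)$ is totally ordered, and $S_g\neq\emptyset$. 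The free $\gr_\upsilon(Z)$-ring hypothesis ensures $\bar a_\omega\,\omega(\bar x)\neq 0$ for every $\omega$ with $a_\omega\neq 0$, so Lemma~\ref{lem:valuationsum}(1) gives $\upsilon(a_\omega\,\omega(x))=\upsilon(a_\omega)+g_\omega$ exactly. Terms outside $S_g$ then lie in $R_{>g}$, and reducing the relation modulo $R_{>g}$ yields
$$
\sum_{\omega\in S_g}\bar a_\omega\,\omega(\bar x)=0\quad\text{in } R_g.
$$
Each $\bar a_\omega\neq 0$, and by injectivity of $x\mapsto\bar x$ the distinct $\omega\in S_g$ produce distinct basis monomials in the free $\gr_\upsilon(Z)$-ring, a contradiction. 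Case (4) follows verbatim with $\omega$ running over reduced words in $X\cup X^{-1}$, using (4a) to identify $\overline{x^{-1}}=(\bar x)^{-1}$.

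For the primed cases (3$'$) and (4$'$), the valuation hypothesis yields by Lemma~\ref{lem:valuationsum}(2) that $\gr_\upsilon(R)$ is a domain, which substitutes for the free $\gr_\upsilon(Z)$-ring hypothesis when arguing $\bar a_\omega\,\omega(\bar x)\neq 0$: here $\omega(\bar x)\neq 0$ by the free $Z_0$-algebra hypothesis and $\bar a_\omega\neq 0$ by construction. The same minimum-valuation argument then produces $\sum_{\omega\in S_g}\bar a_\omega\,\omega(\bar x)=0$ in $R_g$. The main obstacle, and the hardest part of the proof, is that the coefficients $\bar a_\omega$ now lie in various graded components $Z_{g-g_\omega}$ of $\gr_\upsilon(Z)$ rather than in $Z_0$, so the free $Z_0$-algebra hypothesis is not immediately applicable. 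I expect to overcome this by partitioning $S_g$ according to the value of $g_\omega$ (so that within each block all coefficients live in a single $Z_h$), using the centrality of $Z$ in $R$ together with the domain property of $\gr_\upsilon(R)$ to cancel a common $\gr_\upsilon(Z)$-factor across each block, and thereby reducing to a nontrivial $Z_0$-linear relation among distinct monomials $\omega(\bar x)$, which contradicts the free $Z_0$-algebra hypothesis.
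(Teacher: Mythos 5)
Your treatment of parts (1)--(4) is correct and is essentially the argument of the paper: identify the leading image of a product of the $x$'s (or of $a_\omega\,\omega(x)$) with the corresponding product of leading images via Lemma~\ref{lem:valuationsum}(1), isolate the terms of minimal value, and use Lemma~\ref{lem:valuationsum}(3) together with the graded freeness hypothesis; the paper phrases this directly (the sum has finite value, hence is nonzero) where you argue by contradiction, but the content is the same.

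The primed cases (3$'$) and (4$'$) are where your proposal has a genuine gap, and you have correctly located it but not closed it. After reducing modulo $R_{>g}$ you obtain $\sum_{\omega\in S_g}\bar a_\omega\,\omega(\bar x)=0$ with $\bar a_\omega\in Z_{h_\omega}$ for varying $h_\omega$, and this is \emph{not} yet a contradiction to the free $Z_0$-algebra hypothesis: that hypothesis only forbids relations with coefficients in $Z_0$, and a priori nothing excludes, say, $\bar c\,\omega_1(\bar x)=\omega_2(\bar x)$ with $\bar c\in Z_h$, $h>0$. Your proposed repair does not work as described: for a general commutative $Z$ a finite subset of $Z_h$ need not admit a common factor with cofactors in $Z_0$; and even granting such a factorization, the contributions of the different blocks all lie in the single homogeneous component $R_g$, so they cannot be separated by degree, and the domain property gives no way to conclude that each block vanishes. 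What is actually needed is the statement that, when $\upsilon$ is a valuation, freeness over $Z_0$ of the subalgebra generated by the $\bar x$ is \emph{equivalent} to freeness over all of $\gr_\upsilon(Z)$, which converts hypothesis (3$'$)/(4$'$) into hypothesis (3)/(4) and finishes the proof. The paper obtains this equivalence by noting that $\gr_\upsilon(R)$ is a domain with $\gr_\upsilon(Z)$ central, localizing at the central multiplicative set $S=\gr_\upsilon(Z)\setminus\{0\}$, and applying the Makar-Limanov--Malcolmson Lemma~\ref{lem:MalcolmsonMakarLimanov} inside $S^{-1}\gr_\upsilon(R)$ to compare freeness over the field $S^{-1}\gr_\upsilon(Z)$ with freeness over the subfield generated by $Z_0$. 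Some such passage to a field of central coefficients (or an equivalent device) is indispensable here; without it the primed statements remain unproved.
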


\begin{proof}
We prove (2) and (4). The others can be shown in the same way.

The fact that
$\upsilon(x^{-1})=-\upsilon(x)$  implies that 
$x+R_{>\upsilon(x)}$ is invertible in $\gr_\upsilon(R)$
with inverse $x^{-1}+R_{>\upsilon(x^{-1})}$ for each $x\in X$.

For any $f\in R$, we
write $\tilde{f}=f+R_{>\upsilon(f)}\in \gr_\upsilon(R)$.

We may suppose that $X=\{x_1,\dotsc,x_n\}$. 
For any nontrivial reduced group word $\omega$ on
the variables $\{X_1,\dotsc,X_n\}$, by hypothesis (b), one has
$\omega(\tilde{x}_1,\dotsc,\tilde{x}_n)=\widetilde{\omega(x_1,\dotsc,
x_n)}\neq \tilde{1}$. Thus $\omega(x_1,\dotsc,x_n)\neq 1$. Hence the
multiplicative group generated by $X$ is the free group on $X$ and (2)
is proved.

Let now $\omega_1,\dotsc\omega_r$ be different reduced group words
on $\{X_1,\dotsc,X_n\}$, and let $a_1,\dotsc,a_r\in Z\setminus\{0\}$.
Suppose that $\upsilon(x_i)=g_i$,
$i=1,\dotsc,n$. By Lemma~\ref{lem:valuationsum}(1)
$$\upsilon(a_j\omega_j(x_1,\dotsc,x_n))=\upsilon(a_j)+\omega_j(g_1,\dotsc,g_n),\ 
j=1,\dotsc,r,$$
where $\omega_j(g_1,\dotsc,g_n)$ is expressed in additive notation.
  We may suppose that 
$$\upsilon(a_1\omega_1(x_1,\dotsc,x_n))
\leq
\dotsb \leq \upsilon(a_r\omega_r(x_1,\dotsc,x_n)).$$ Let $t$ be the least
positive integer such that 
\begin{multline*}
\upsilon(a_1\omega_1(x_1,\dotsc,x_n))=\dotsb
=\upsilon(a_t\omega_t(x_1,\dotsc,x_n))<\upsilon(a_{t+1}\omega_{t+1}(x_1,\dotsc,x_n)) \leq\\
\dotsb\leq \upsilon(a_r\omega_r(x_1,\dotsc,x_n)).
\end{multline*}
 By assumption (4)(b), 
$$0\neq \sum_{j=1}^t \widetilde{a}_j\omega_j(\tilde{x}_1,\dotsc,\tilde{x}_n)\in
R_{\upsilon(a_1\omega_1(x_1,\dotsc,x_n))}.$$
By Lemma~\ref{lem:valuationsum}(3),
$$\upsilon\left(\sum_{j=1}^ta_j\omega_j(x_1,\dotsc,x_n)\right)=\upsilon(a_1\omega_1(x_1,\dotsc,x_n)).$$
Thus $\upsilon \Big(\sum\limits_{j=1}^ra_j\omega_j(x_1,\dotsc,x_n)\Big)=\upsilon(a_1\omega_1(x_1,\dotsc,x_n))\neq \infty$ and
therefore \linebreak $\sum\limits_{j=1}^ra_j\omega_j(x_1,\dotsc,x_n)\neq 0$, as
desired.

Suppose now that $\upsilon$ is a valuation. Hence
$\upsilon(x^{-1})=-\upsilon(x)$
for all $x\in X$ and $\gr_\upsilon(R)$ is a domain. 
We now show that (4')(b) is equivalent to (4)(b).
First note that $\upsilon(a)+\upsilon(f)=\upsilon(af)=\upsilon(fa)=\upsilon(f)+\upsilon(a)$ 
for all $a\in Z$, $f\in R$. Hence $\gr_\upsilon(Z)$ is commutative and
$\gr_\upsilon(R)$ is a $\gr_\upsilon(Z)$-algebra.
Consider the central multiplicative subset of $\gr_\upsilon(R)$
$S=\gr_\upsilon(Z)\setminus \{0\}$, and
localize $\gr_\upsilon(R)$ at $S$. Then $\gr_\upsilon(R)$ embeds in the domain $S^{-1}\gr_\upsilon(R)$. Let $Q_0$
be the subfield of $C=S^{-1}\gr_\upsilon(Z)$ generated by $Z_0$. By Lemma~\ref{lem:MalcolmsonMakarLimanov}, the
$C$-subalgebra of $S^{-1}\gr_v(R)$ generated by a set $Y$ is the free $C$-subalgebra on $Y$ if, and only if,
the $Q_0$-subalgebra of $S^{-1}\gr_v(R)$ generated by  $Y$ is the free $Q_0$-subalgebra on $Y$. Now observe that these 
subalgebras are free over $C$ (respectively $Q_0$) if, and only if, they are free over $\gr_\upsilon(Z)$ (respectively $Z_0$). 
\end{proof}


\section{When the value group is a subgroup of the real numbers}\label{sec:valuerealnumber}

Let $Y=\{X_1,\dotsc,X_n\}$ be
a set of $n$ noncommutative variables.
Fix $n$ (not necessarily different) positive real numbers
$p_1,p_2,\dotsc,p_n$. For each  $\omega\in Y^*$,
$$\omega=\omega(X_1,X_2,\dotsc,X_n)=X_{i_1}X_{i_2}\dotsb X_{i_s},$$
where $i_j\in\{1,\dotsc,n\}$,  we define the \emph{weighted degree}
of $w$ as
$$\wed(\omega)=\sum_{j=1}^s p_{i_j}.$$
Observe that $\wed(1)=0$ and
 $\wed(\omega\omega')=\wed(\omega)+\wed(\omega')$ for all $\omega,\omega'\in Y^*$.

Let $Z$ be a commutative ring endowed with a positive filtration
$\eta\colon Z\rightarrow \mathbb{R}_\infty$  such that
$\eta(z)\geq 0$ for all $z\in Z$. 
Define the  map $\wed_\eta\colon Z\langle Y\rangle \rightarrow
\mathbb{R}_\infty$ as follows. Let $f\in Z\langle Y\rangle$,
$f=\sum\limits_{\omega\in Y^*} z_\omega \omega$, then
\begin{equation*}
\wed_\eta(f) =   \min\{\eta(z_\omega)+\wed(\omega)\colon \omega\in Y^*\}.
\end{equation*}

\begin{lem}\label{lem:completion}
Let $Z$ be a commutative ring endowed with a positive filtration
$\eta\colon Z\rightarrow \mathbb{R}_\infty$  such that
$\eta(z)\geq 0$ for all $z\in Z$. Let $Y=\{X_1,\dotsc,X_n\}$ be
a set of $n$ noncommutative variables. Fix $n$ (not necessarily
different) positive real numbers $p_1,p_2,\dotsc,p_n$. The following statements
hold true.
\begin{enumerate}[\rm (1)]
\item The map $\wed_\eta\colon Z\langle Y\rangle\rightarrow
\mathbb{R}_\infty$ is a positive filtration (over $Z$).
\item If $\eta\colon Z\rightarrow \mathbb{R}_\infty$ is a valuation, then
$\wed_\eta\colon Z\langle Y\rangle\rightarrow
\mathbb{R}_\infty$ is a valuation.
\item For each positive integer $d$, let $L_d=\{f\in Z\langle Y\rangle\colon \wed_\eta(f)\geq
d\}$. Then $Z\langle\langle Y\rangle\rangle \hookrightarrow
\varprojlim Z\langle Y\rangle/L_d$.
\end{enumerate}
\end{lem}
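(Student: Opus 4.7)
The plan is to treat the three parts in turn, with part (2) as the only step requiring real ingenuity. For (1), I will verify (PF.1)--(PF.4) directly from the definition of $\wed_\eta$. Properties (PF.1) and (PF.2) are immediate, since $\wed_\eta(f)=\infty$ forces $\eta(z_\omega)=\infty$, hence $z_\omega=0$, for every $\omega\in Y^*$, and addition of power series is coefficient-wise. For (PF.3), the coefficient of $\sigma$ in $fg$ is $\sum_{\omega\omega'=\sigma}z_\omega z'_{\omega'}$, so the usual inequalities $\eta(\sum)\geq\min\eta$ and $\eta(xy)\geq\eta(x)+\eta(y)$ combined with $\wed(\omega\omega')=\wed(\omega)+\wed(\omega')$ give $\wed_\eta(fg)\geq \wed_\eta(f)+\wed_\eta(g)$. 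Finally, (PF.4) follows from $\wed_\eta(z)=\eta(z)\geq 0$ for $z\in Z$.

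For part (2), the task is to upgrade the inequality in (PF.3) to an equality when $\eta$ is a valuation, and this is the main obstacle because of possible cancellations among different factorizations. I would fix a total length-lexicographic order $\prec$ on $Y^*$, obtained by ordering $Y$ arbitrarily and comparing words first by length and then by lexicographic order; this is a well-order and a two-sided monoid order on $Y^*$. Writing $a=\wed_\eta(f)$, $b=\wed_\eta(g)$, and letting $A=\{\omega:z_\omega\neq 0,\ \eta(z_\omega)+\wed(\omega)=a\}$, $B=\{\omega:z'_\omega\neq 0,\ \eta(z'_\omega)+\wed(\omega)=b\}$ (both finite), I would pick the $\prec$-maxima $\omega^*\in A$ and $\omega^{**}\in B$ and consider $\sigma=\omega^*\omega^{**}$. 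If $\omega_1\omega_2=\sigma$ with $(\omega_1,\omega_2)\in A\times B$, then $\omega_1\preceq\omega^*$ and $\omega_2\preceq\omega^{**}$; assuming $\omega_1\prec\omega^*$ and using right-invariance gives $\omega_1\omega_2\prec\omega^*\omega_2$, hence $\omega^*\omega^{**}\prec\omega^*\omega_2$, and left-invariance yields $\omega^{**}\prec\omega_2$, contradicting $\omega_2\preceq\omega^{**}$. So the pair $(\omega^*,\omega^{**})$ is the unique factorization of $\sigma$ inside $A\times B$. Because $\eta$ is a valuation on $Z$, the term $z_{\omega^*}z'_{\omega^{**}}$ contributes valuation exactly $a+b-\wed(\sigma)$, while every other pair $(\omega_1,\omega_2)$ gives a strictly larger valuation (since $\omega_1\notin A$ or $\omega_2\notin B$ forces one summand to exceed the respective minimum). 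The valuation property then gives $\eta(c_\sigma)+\wed(\sigma)=a+b$, so $\wed_\eta(fg)\leq a+b$.

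For (3), let $p=\min_i p_i>0$. Since any $f\in J^k$ is supported on words of length $\geq k$, each of which has weighted degree $\geq kp$, and since $\eta\geq 0$ on $Z$, one obtains $J^k\subseteq L_d$ whenever $kp\geq d$. This compatibility yields a canonical ring map $\varprojlim Z\langle Y\rangle/J^d\to\varprojlim Z\langle Y\rangle/L_d$, and under the identification $Z\langle\langle Y\rangle\rangle=\varprojlim Z\langle Y\rangle/J^d$ it sends $f=\sum a_\omega\omega$ to the compatible family of truncations modulo $L_d$. For injectivity I would argue by contradiction: if some $f\mapsto 0$ and $a_{\omega_0}\neq 0$, choose any integer $d>\eta(a_{\omega_0})+\wed(\omega_0)$ and then $k>|\omega_0|$ with $J^k\subseteq L_d$; the truncation $f_k=\sum_{|\omega|<k}a_\omega\omega$ must lie in $L_d$, but $\omega_0$ lies in its support, forcing $\wed_\eta(f_k)\leq \eta(a_{\omega_0})+\wed(\omega_0)<d$, a contradiction. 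Hence $f=0$, giving the desired embedding.
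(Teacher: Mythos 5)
Your proof is correct, and parts (1) and (3) coincide with the paper's arguments (the paper uses $J^m\subseteq L_{[um]}$ with $u=\min_i p_i$ and detects a nonzero finite truncation of a given series, exactly as you do). The only real divergence is in part (2), where one must isolate a single word $\omega_0$ whose coefficient in $ff'$ has valuation exactly $\wed_\eta(f)+\wed_\eta(f')-\wed(\omega_0)$. The paper selects $\alpha_0,\beta_0$ of \emph{least weighted degree} among the words attaining the two minima, sets $\omega_0=\alpha_0\beta_0$, and uses the prefix/suffix structure of factorizations of $\omega_0$ (together with the fact that a proper prefix has strictly smaller weighted degree, since all $p_i>0$) to see that every competing factorization contributes strictly larger valuation. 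You instead take the shortlex \emph{maxima} $\omega^*,\omega^{**}$ of the two sets of minimizers and use the two-sided invariance and cancellativity of the shortlex order to show $(\omega^*,\omega^{**})$ is the unique factorization of $\sigma=\omega^*\omega^{**}$ inside $A\times B$. Both devices achieve the same end -- a unique dominant term to which the valuation property of $\eta$ on $Z$ applies -- so the difference is one of bookkeeping rather than substance: yours is the standard leading-term argument for monomial orders and transfers verbatim to other admissible orders, while the paper's stays entirely within the weighted-degree language it has already set up and avoids introducing an auxiliary order on $Y^*$.
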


\begin{proof}
The proof is analogous to \cite[p.87]{Cohnskew}.

\noindent (1) 
Clearly $\wed_\eta(f)=\infty$ if, and only if, $f=0$. 

Let $f=\sum\limits_{\omega\in Y^*} z_\omega \omega,\, 
f'=\sum\limits_{\omega\in Y^*}z_\omega'\omega\in Z\langle Y\rangle$.

By
definition $f+f'=\sum\limits_{\omega\in Y^*}(z_\omega+z_\omega')\omega$. 
For each $\omega\in Y^*$, 
\begin{eqnarray*}
\wed_\eta((z_\omega+z_\omega')\omega) & = & \eta(z_\omega+z_\omega')+\wed(\omega) \\ & \geq & 
\min\{\eta(z_\omega),\eta(z_\omega')\}+\wed(\omega) \\ 
& = &\min\{\wed_\eta(z_\omega\omega), 
\wed_\eta(z_\omega'\omega)\}.
\end{eqnarray*}
Hence $\wed_\eta(f+f')\geq \min\{\wed_\eta(f),\wed_\eta(f')\}$.

Now $ff'=\sum\limits_{\omega\in Y^*}\Big(\sum\limits_{\alpha\beta=\omega}z_\alpha z_\beta'\Big)\omega$. 
Respectively, let $\alpha_0,\beta_0\in Y^*$ be words of least weighted degree among the words
$\alpha,\beta\in Y^*$ such that 
$\wed_\eta(f)=\eta(z_{\alpha})+\wed(\alpha)$  and 
$\wed_\eta(f')=\eta(z_{\beta}')+\wed(\beta)$. 
By definition, $$\wed_\eta(ff')=\min\Big\{\eta\Big(\sum_{\alpha\beta=\omega}z_\alpha z_\beta' \Big)+\wed(\omega)\colon \omega\in Y^*  \Big\}$$
Observe that, for all $\alpha,\beta\in Y^*$, $$\eta(z_\alpha z_\beta')+\wed(\alpha\beta)\geq 
\eta(z_{\alpha_0})+\wed(\alpha_0) +\eta(z_{\beta_0}')+\wed(\beta_0)=
\wed_\eta(f)+\wed_\eta(f').$$
Thus 
\begin{equation}\label{eq:wdispositivefiltration}
\wed_\eta(ff')\geq \wed_\eta(f)+\wed_\eta(f'),
\end{equation}
and (1) is proved.

(2) Now we must prove that equality in \eqref{eq:wdispositivefiltration} holds.
For that, it is enough to find $\omega_0\in Y^*$ for which $\eta \Big(\sum\limits_{\alpha\beta
=\omega_0}z_\alpha z_\beta'\Big)+\wed(\omega_0)=\wed_\eta(f)+\wed_\eta(f')$.

Let $\omega_0=\alpha_0\beta_0$. Consider the set 
$$A=\{(\alpha,\beta)\in Y^*\times Y^*\colon \alpha\beta=\omega_0\}\setminus \{(\alpha_0,\beta_0)\}.$$
For each $(\alpha,\beta)\in A$, either there exists $\alpha_1\in Y^*$, $\alpha_1\neq 1$,
such that $\alpha_0=\alpha\alpha_1$ or there exists $\beta_1\in Y^*$, $\beta_1\neq 1$, such that
$\beta_0=\beta_1\beta$. If $\alpha_0=\alpha\alpha_1$,  $\wed(\alpha)<\wed(\alpha_0)$, and
if $\beta_0=\beta_1\beta$, $\wed(\beta)<\wed(\beta_0)$. By the choice of $\alpha_0,\beta_0$,
we get that for each $(\alpha,\beta)\in A$ either $\eta(z_{\alpha_0})+\wed(\alpha_0)<
\eta(z_\alpha)+\wed(\alpha)$ or $\eta(z_{\beta_0}')+\wed(\beta_0)<
\eta(z_\beta')+\wed(\beta)$. Hence
\begin{eqnarray*}
\eta(z_{\alpha_0}z_{\beta_0}')+\wed(\omega_0) & = & 
\eta(z_{\alpha_0})+ \wed(\alpha_0) +\eta(z_{\beta_0}')+\wed({\beta_0}) \\
 & < & \eta(z_\alpha)+\wed(\alpha) + \eta(z_{\beta}')+\wed(\beta) \\
& = & \eta(z_\alpha z_\beta')+\wed(\omega).
\end{eqnarray*}
Therefore $\eta(z_{\alpha_0}z_{\beta_0}')<\eta(z_\alpha z_\beta ')$ for all
$(\alpha,\beta)\in A$. By the properties of valuations, $\eta\Big(\sum\limits_{\alpha\beta=\omega_0}z_\alpha z_\beta' \Big)
=\eta(z_{\alpha_0})+\eta(z_{\beta_0}')$, which implies the result.

(3) Let $J=\langle Y\rangle$ be the ideal of $Z\langle Y\rangle$ generated by $Y$.  Define
$u=\min\{p_1,\dotsc,p_n\}$. Then, for each natural number $m$,
$J^m\subseteq L_{\left[um\right]}$, where $\left[ a \right]$ denotes
the greatest integer smaller or equal to the real number $a$. This
induces natural ring homomorphisms
$$Z\langle
Y\rangle/J^m\stackrel{\phi_m}{\longrightarrow} Z\langle
Y\rangle/L_{[um]},$$ such that the following diagram is commutative
for $m_1<m_2$
$$\xymatrix{Z\langle
Y\rangle/J^{m_2}\ar[r]^{\phi_{m_2}}\ar[d]
 & Z\langle
Y\rangle/L_{[um_2]}\ar[d]\\ Z\langle
Y\rangle/J^{m_1}\ar[r]^{\phi_{m_1}} & Z\langle Y\rangle/L_{[um_1]}
}$$  Thus we obtain a ring homomorphism between the completions
$$Z\langle\langle Y\rangle\rangle=\varprojlim_m Z\langle
Y\rangle/J^m \stackrel{\phi}{\longrightarrow} \varprojlim_d Z\langle
Y\rangle/L_d.$$ Let a nonzero series $f=\sum\limits_{\omega\in
Y^*}z_\omega \omega\in Z\langle\langle Y\rangle\rangle$ be given. For each
positive integer $d$,
\begin{equation}\label{eq:finitesum}
\sum_{\wed_\eta(z_\omega\omega)< d}z_\omega\omega
\end{equation} is a finite sum. Choose $d$ such that \eqref{eq:finitesum}
is not zero in $Z\langle Y\rangle$. Let now $m$ be such that $[um]>d$.
Then $\phi(f)\neq 0$ because the projection
of $\phi(f)$ in $Z\langle Y\rangle/L_{[um]}$ is not zero. Hence $\phi$ is an injective ring homomorphism.
\end{proof}

\begin{theo}\label{theo:freegroupalgebrafiltrationreals}
Let $Z$ be a commutative ring and $R$ be a $Z$-algebra. Let
$\upsilon\colon R\rightarrow \mathbb{R}_\infty$ be a nontrivial
positive filtration. Let $X$ be a subset of elements of $R$ satisfying the following
conditions
\begin{enumerate}[\rm (1)]
\item The map $X\rightarrow\gr_\upsilon(R)$, $x\mapsto x+R_{>\upsilon(x)}$, is injective.
\item For each $x\in X$, $\upsilon(x)>0$.
\item The $\gr_\upsilon(Z)$-subalgebra of $\gr_\upsilon(R)$ generated by the set
$\{x+R_{>\upsilon(x)}\}_{x\in X}$ is the free $\gr_\upsilon(Z)$-algebra
on the set $\{x+R_{>\upsilon(x)}\}_{x\in X}$.
\item For each $x\in X$, $1+x$ is invertible in $R$ and $\upsilon((1+x)^{-1})=0$.
\end{enumerate}
Then the $Z$-subalgebra of $R$ generated by $\{1+x,\, (1+x)^{-1}
\}_{x\in X}$ is the free group $Z$-algebra on the set
$\{1+x\}_{x\in X}$.
\end{theo}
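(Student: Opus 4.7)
The plan is to realize Lichtman's sketch described after Theorem~\ref{coro:divisionrings}: embed a formal power series ring $Z\langle\langle Y\rangle\rangle$ into a suitable completion of $O:=R_{\geq 0}$ and then invoke Fox's embedding (Theorem~\ref{theo:MagnusFox}). Since freeness of the group $Z$-algebra on $\{1+x\}_{x\in X}$ is tested on finite subsets of $X$, I first reduce to the case $X=\{x_1,\dotsc,x_n\}$. Setting $p_i:=\upsilon(x_i)>0$, $Y:=\{X_1,\dotsc,X_n\}$ and $\eta:=\upsilon|_Z$, I equip $Z\langle Y\rangle$ with the weighted degree $\wed_\eta$ of Lemma~\ref{lem:completion} with weights $p_i$, which is itself a positive filtration by part~(1) of that lemma.

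The heart of the proof is to show that the $Z$-algebra map $\phi\colon Z\langle Y\rangle\to R$ given by $X_i\mapsto x_i$ is valuation-preserving, namely $\upsilon(\phi(f))=\wed_\eta(f)$ for every nonzero $f=\sum z_\omega\omega\in Z\langle Y\rangle$. The inequality $\upsilon(\phi(f))\geq\wed_\eta(f)=:c$ is immediate from the axioms of a positive filtration. For equality I pass to $\gr_\upsilon(R)$: hypothesis~(3) says the $\gr_\upsilon(Z)$-subalgebra generated by the $\tilde{x}_i:=x_i+R_{>p_i}$ is free on these generators, so it is a free $\gr_\upsilon(Z)$-module on the distinct monomials $\omega(\tilde{x}_1,\dotsc,\tilde{x}_n)$, each nonzero and equal to $\omega(x_1,\dotsc,x_n)+R_{>\wed(\omega)}\in R_{\wed(\omega)}$ by Lemma~\ref{lem:valuationsum}(1). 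Collecting the terms of $\phi(f)+R_{>c}$ arising from $\omega$ with $\eta(z_\omega)+\wed(\omega)=c$, I obtain in $R_c$ the sum $\sum_\omega \tilde{z}_\omega\cdot\omega(\tilde{x}_1,\dotsc,\tilde{x}_n)$, a $\gr_\upsilon(Z)$-linear combination of distinct basis vectors with nonzero coefficients $\tilde{z}_\omega=z_\omega+Z_{>\eta(z_\omega)}$. Hence this sum is nonzero in $R_c$, and by Lemma~\ref{lem:valuationsum}(3) one concludes $\upsilon(\phi(f))=c$. This identification of leading parts is the main conceptual step and the only place where hypothesis~(3) enters essentially.

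Let $\hat O:=\varprojlim_{n\in\mathbb{N}} O/R_{\geq n}$. Because $\upsilon$ takes values in $\mathbb{R}$, $\bigcap_n R_{\geq n}=0$ and so $O$ embeds in $\hat O$. The previous step shows that $\phi$ carries $L_n:=\{f:\wed_\eta(f)\geq n\}$ into $R_{\geq n}$, inducing a ring homomorphism $\hat\phi\colon\varprojlim_n Z\langle Y\rangle/L_n\to\hat O$; composing with the embedding of Lemma~\ref{lem:completion}(3) yields $\hat\phi\colon Z\langle\langle Y\rangle\rangle\to\hat O$. For injectivity, given nonzero $f\in Z\langle\langle Y\rangle\rangle$ with $c:=\wed_\eta(f)<\infty$, I choose an integer $n>c$; the image $\bar f$ of $f$ in $Z\langle Y\rangle/L_n$ is a nonzero polynomial with $\wed_\eta(\bar f)=c$, so by the key step $\upsilon(\phi(\bar f))=c<n$ and hence $\hat\phi(f)\neq 0$ already in $O/R_{\geq n}$.

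To finish, condition~(4) places both $1+x_i$ and $(1+x_i)^{-1}$ inside $O$, so the $Z$-subalgebra $S$ of $R$ generated by $\{1+x_i,(1+x_i)^{-1}\}_{i=1}^n$ lies inside $O\hookrightarrow\hat O$. Writing $F$ for the free group on $Y$, uniqueness of inverses in $\hat O$ shows that the composite
\[ Z[F]\stackrel{\Psi}{\longrightarrow} Z\langle\langle Y\rangle\rangle\stackrel{\hat\phi}{\longrightarrow}\hat O \]
of Fox's embedding with $\hat\phi$ sends $X_i\mapsto 1+x_i$ and $X_i^{-1}\mapsto(1+x_i)^{-1}$; it is therefore injective and factors through $S$, which is consequently isomorphic to $Z[F]$, as required. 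The main obstacle is the valuation-preservation identity of the second paragraph; once that is in hand, the remainder is diagram chasing.
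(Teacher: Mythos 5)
Your proposal is correct and follows essentially the same route as the paper's proof: reduce to finite $X$, use hypothesis (3) together with Lemma~\ref{lem:valuationsum} to show that the evaluation map $Z\langle Y\rangle\to O$ satisfies $\upsilon(f(x_1,\dotsc,x_n))=\wed_\eta(f)$, embed $Z\langle\langle Y\rangle\rangle$ into the completion $\varprojlim O/K_d$ via Lemma~\ref{lem:completion}(3), and conclude with Fox's Theorem~\ref{theo:MagnusFox}. The only cosmetic difference is that you verify injectivity of the composite map into the completion directly on power series rather than quoting the paper's chain of embeddings, which changes nothing of substance.
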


\begin{proof}
Observe that it is enough to show the result for $X$ a finite set.
Thus suppose that $X=\{x_1,\dotsc,x_n\}$. Fix the positive real
numbers $p_i=\upsilon(x_i)$, $i=1,\dotsc,n$.

Let $O=\{x\in R\colon \upsilon(x)\geq 0\}$. For each $d\geq 1$, let
$K_d=\{x\in R\colon \upsilon(x)\geq d\}$. Notice that $K_d$ is an
ideal of $O$ for each $d$.

Let $Y=\{X_1,\dotsc,X_n\}$ be a set of $n$ noncommuting variables
and consider the free $Z$-algebra $Z\langle Y\rangle$. Let
$f=\sum\limits_{\omega\in Y^*} z_\omega\omega\in Z\langle Y\rangle$
be a nonzero polynomial.
Let $t=\min\{\upsilon(z_\omega\omega(x_1,\dotsc,x_n))\colon
\omega\in Y^*\}$. Let $A=\{\omega\in Y^*\colon
\upsilon(z_\omega\omega(x_1,\dotsc,x_n))=t\}$. The set $A$ is finite
and not empty. Suppose that $A=\{\omega_1,\dots,\omega_r\}$. We can
write
$$f(x_1,\dotsc,x_n)=\sum_{j=1}^r z_{\omega_j}\omega_j(x_1,\dotsc,x_n)+
\sum_{\upsilon(z_w\omega(x_1,\dotsc,x_n))>t}z_\omega\omega(x_1,\dotsc,x_n).$$
By condition (3), the element
$\sum\limits_{j=1}^rz_{\omega_j}\omega_j(x_1,\dotsc,x_n)+R_{>t}\in
R_t$ is not zero. By Lemma~\ref{lem:valuationsum}(3),
$\upsilon\Big(\sum\limits_{j=1}^rz_{\omega_j}\omega_j(x_1,\dotsc,x_n)
\Big)=t$. Therefore $\upsilon(f(x_1,\dotsc,x_n))=t$. In particular,
this implies that $Z\langle Y\rangle\hookrightarrow O$ via $f\mapsto
f(x_1,\dotsc,x_n)$.

Let $\eta$ be the restriction of $\upsilon$ to $Z$. 
Note that for each $\omega\in Y^*$ and $z\in Z$,
$\upsilon(z\omega)=\eta(z)+\wed(w)$, where $\wed$ is as defined
before Lemma~\ref{lem:completion} and associated to $p_1,\dotsc,p_n$. The
foregoing implies that $$\upsilon(f(x_1,\dotsc,x_n))=\wed_\eta(f)
\textrm{ for all } f\in Z\langle Y\rangle,$$ 
where $\wed_\eta$ is the valuation studied in Lemma~\ref{lem:completion}.
Setting $L_d=\{f\in
Z\langle Y\rangle\colon \wed_\eta(f)\geq d\}$ for each integer $d\geq 1$,
we obtain the embedding $$Z\langle Y\rangle/L_d\hookrightarrow
O/K_d, \ \ f+L_d\mapsto f(x_1,\dotsc,x_n)+K_d,$$ Thus
$$\varprojlim Z\langle Y\rangle/L_d\hookrightarrow \varprojlim O/K_d.$$
Composing this embedding with the one given in
Lemma~\ref{lem:completion}, we obtain the embedding
$$\Phi\colon
Z\langle\langle Y\rangle\rangle\hookrightarrow \varprojlim Z\langle
Y\rangle / L_d\hookrightarrow \varprojlim O/K_d, \ \
\sum\limits_{\omega\in Y^*}z_\omega\omega\mapsto \sum_{\omega\in
Y^*}z_\omega \omega(x_1,\dotsc,x_n).$$

By condition (4), $1+x_i,\, (1+x_i)^{-1}\in O$ for $i=1,\dotsc,n$.
The natural ring homomorphisms $\pi_d\colon O\rightarrow O/K_d$ for
all $d\geq 1$, induce the ring homomorphism $\pi\colon O\rightarrow
\varprojlim O/K_d$. Note that $\pi(1+x_i)=\Phi(1+X_i)$, and
$\pi((1+x_i)^{-1})=\Phi((1+X_i)^{-1})$. By
Theorem~\ref{theo:MagnusFox}, the $Z$-subalgebra of $\varprojlim
O/K_d$ generated by $\{\pi(1+x_i),\pi(1+x_i)^{-1}\}_{i=1}^n$ is the
free group $Z$-algebra on $\{\pi(1+x_i)\}_{i=1}^n$. The facts that $\pi$ is
a $Z$-algebra homomorphism and $O$ is a subalgebra of $R$ imply the
result.
\end{proof}

Let $K$ be a field, and suppose that we want to find free group $K$-algebras on a
$K$-algebra $R$. Suppose $R$ is endowed with a map $\upsilon \colon R\rightarrow G_\infty$
satisfying (PF.1), (PF.2), (PF.3) and the restriction to $\upsilon$ to $F$ is a valuation.
Let $Z=\{a\in Z\colon \upsilon(a)\geq 0\}$. We can try to
apply Theorem~\ref{theo:freegroupalgebrafiltrationreals}, 
Theorem~\ref{theo:freegroupalgebrafiltrationorderedgroup} or 
Theorem~\ref{coro:divisionrings}
to the $Z$-algebra $R$. Then the
$F$-subalgebra of $R$ generated by $F$ and any free group $Z$-algebra is a free group $K$-algebra
because $F$ is the field of fractions of $Z$.

\medskip


\section{When the value group is a general ordered group}\label{sec:valuegeneralordered}

\subsection{Background on ordered groups}\label{sec:preliminariesordered}

We gather together some definitions and known results on
ordered groups. Everything can be found in
\cite[Sections~IV.1, IV.3]{Fuchs}, for example.

Given two ordered groups $(G,<)$ and $(H,\prec)$, a
\emph{homomorphism of ordered groups} is a homomorphism of groups
$\pi\colon G\rightarrow H$ such that $\pi(g)\preceq\pi(g')$ for all
$g,g'\in G$ such that $g\leq g'$.

\medskip

Let $(G,<)$ be an ordered group. The \emph{absolute} $|g|$ of an
element $g\in G$ is defined as $|g|=\max\{g,\, -g\}$. Let $g,h\in
G$. The element $g$ is said to be \emph{infinitely small relative
to} $h$ if $$n|g|< |h| \textrm{ for all positive integers } n;$$ we
denote this situation by $g\ll h$. On the other hand,  $g$ and $h$
are called \emph{Archimedean equivalent}, denoted by $a\sim b$, if
there exist positive integers $m$ and $n$ such that
$$|g|<m|h| \textrm{ and } |h|<n|g|.$$
It follows readily that
\begin{enumerate}[(i)]
\item for each pair $g,h\in G$ one, and only one, of the following
relations holds: $$g\ll h,\qquad g\sim h,\qquad h\ll g;$$
\item $g\ll h$ and $g\sim p$ imply $p\ll h$;
\item $g\ll h$ and $h\sim q$ imply $g\ll q$;
\item $g\ll h$ and $h\ll p$ imply $g\ll p$;
\item $g\sim h$ and $h\sim p$ imply $g\sim p$.
\end{enumerate}

\vspace{0.3cm}

Let $(G,<)$ be an ordered group. A subgroup $C$ of $G$ is
\emph{convex} if, for all $g,h\in C$ and $d\in G$, the inequality
$g\leq d\leq h$ implies that $d\in C$.

Let $\Sigma$ be the set of convex subgroups of $(G,<)$. It satisfies
the following properties:
\begin{enumerate}[(1)]
\item $\Sigma$ is a chain of subgroups of $G$ containing the trivial subgroups $\{0\}$
and $G$.

\item If  $\{C_\lambda\}_{\lambda\in\Lambda}\subseteq
\Sigma$, then their intersection $\bigcap_{\lambda\in\Lambda}
C_\lambda$ and union $\bigcup_{\lambda\in\Lambda} C_\lambda$ also
belong to $\Sigma.$

\item By a \emph{convex jump} $(N,C)$ in $\Sigma$ we mean a pair of subgroups
$N,C\in\Sigma$ such that $C$ properly contains $N$  and $\Sigma$
contains no subgroup between $C$ and $N$. In this event, $N$ is
normal in $C$. Moreover, the quotient group $C/N$ can be ordered in
the natural way:  for all $c_1,c_2\in C$ with $c_1+N\neq c_2+N$,
then $c_1+N<c_2+N$ if, and only if, $c_1<c_2$. With this ordering,
the group $C/N$ is order isomorphic to a subgroup of the real
numbers. Note that the natural map $C\rightarrow C/N$ is a
homomorphism of ordered groups, i.e. if $c_1\leq c_2$ then
$c_1+N\leq c_2+N$.

\item Every element $g\neq 0$ in $G$ defines a convex jump $(N_g,C_g)$ in
$\Sigma$ where $C_g$ is the intersection of all the elements of
$\Sigma$ that contain $g$ and $N_g$ is the union of all the elements
of $\Sigma$ which do not contain $g$. Note that $C_g=\{h\in G\colon
-n|g|\leq h\leq n|g|\textrm{ for some } n\geq 1\}$.
\end{enumerate}

\subsection{Main result}

The following lemma is similar to some results in
\cite[Chapter~1]{Schilling} which are obtained when $\upsilon$ is a valuation and $N=0$.

\begin{lem}\label{lem:valuationtothereals}
Let $(G,<)$ be an ordered group
and $N\subsetneq C$ be convex subgroups of $G$. Let $Z$ be a commutative ring and
$R$ be a $Z$-algebra. Let $\upsilon\colon R\rightarrow G_\infty$ be
a positive filtration.  The
following hold true.

\begin{enumerate}[\rm (1)]
\item The set $O=\{f\in R\colon \upsilon(f)\geq n \textrm{ for some } n\in
N\}$ is a subring of $R$.
\item The set $T=\{f\in O\colon \upsilon(f)>c \textrm{
for all }c\in C\}$ is an ideal of $O$.
\item The ring $O/T$ is a $Z/Z\cap T$-algebra and the map
$$\vartheta\colon O/T\rightarrow C_\infty, \quad \vartheta(f+T)=
\left\{\begin{array}{ll} \upsilon(f), & \textrm{if } f\in O\setminus
T
\\ \infty, & \textrm{if } f\in T\end{array}\right.$$
is a positive filtration (over $Z/Z\cap T$) with values in $C$.
\end{enumerate}
\end{lem}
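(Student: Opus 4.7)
For part (1), I will check that $O$ is closed under the ring operations by picking, for $f,g \in O$, elements $n_f,n_g \in N$ with $\upsilon(f) \geq n_f$ and $\upsilon(g) \geq n_g$. Then Lemma~\ref{lem:positivefiltrations} and (PF.2) give $\upsilon(f-g) \geq \min\{n_f,n_g\} \in N$, and (PF.3) gives $\upsilon(fg) \geq n_f + n_g \in N$ (here I use that $N$ is a subgroup, so closed under addition and taking the minimum). The identity lies in $O$ since $\upsilon(1) = 0 \in N$.

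For part (2), closure of $T$ under subtraction is immediate from (PF.2). The key calculation is the ideal property: given $f \in T$ and $r \in O$ with $\upsilon(r) \geq n \in N$, I want $\upsilon(rf) > c$ for every $c \in C$. By (PF.3) it suffices to show $\upsilon(f) > -n + c$, and indeed $-n + c \in C$ because $n \in N \subseteq C$ and $C$ is a subgroup, so the inequality follows from $f \in T$. The argument for $fr$ is symmetric. This is where the assumption $N \subseteq C$ really pulls its weight.

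For part (3), first note that $Z \cap T$ is an ideal of $Z$ (intersection of an ideal with a subring that is central in $O$), so $Z/(Z\cap T)$ is a ring, and since $Z \subseteq O$ (by (PF.4)), the composition $Z \hookrightarrow O \twoheadrightarrow O/T$ factors through an injection of $Z/(Z\cap T)$ into the center of $O/T$. The heart of the matter is showing $\vartheta$ is well-defined and takes values in $C_\infty$. For an element $f \in O \setminus T$ one has $n \leq \upsilon(f)$ for some $n \in N$ and $\upsilon(f) \leq c$ for some $c \in C$; since $N \subseteq C$ and $C$ is convex, it follows that $\upsilon(f) \in C$. For well-definedness, suppose $f,g \in O \setminus T$ with $f - g \in T$. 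Then $\upsilon(f-g)$ exceeds every element of $C$, and in particular exceeds $\upsilon(g) \in C$, whence by Lemma~\ref{lem:positivefiltrations}(3) we get $\upsilon(f) = \upsilon(g + (f-g)) = \upsilon(g)$. The case where one of $f,g$ lies in $T$ and the other does not cannot occur since $T$ is closed under subtraction.

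It then remains to verify (PF.1)--(PF.4) for $\vartheta$. Property (PF.1) is built into the definition. Properties (PF.2) and (PF.3) reduce to the corresponding properties of $\upsilon$ by a short case analysis on whether the various elements belong to $T$; the only nontrivial case is when all elements are outside $T$, in which case $\vartheta$ agrees with $\upsilon$ and the inequalities are inherited. Property (PF.4) follows because $\upsilon(a) \geq 0$ for $a \in Z$, and $0 \in N \subseteq C$. The main obstacle throughout is a bookkeeping one: one must repeatedly invoke convexity of $C$ (to land inside $C$) and the subgroup structure of $N$ (to combine the defining inequalities for $O$). Once this is kept straight, every verification is routine.
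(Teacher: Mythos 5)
Your proof is correct and follows essentially the same line as the paper's: direct verification of the subring, ideal, and positive-filtration axioms, with well-definedness of $\vartheta$ handled via Lemma~\ref{lem:positivefiltrations}(3) and convexity of $C$ used to see that $\upsilon(f)\in C$ for $f\in O\setminus T$. The only (harmless) variation is in part (2), where the paper proves the absorption property by a case split on whether $\upsilon(r)$ lies in $N$ or not (invoking convexity of $N$), while you rewrite the target inequality as $\upsilon(f)>-n+c$ with $-n+c\in C$, which is a slightly more direct computation to the same effect.
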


\begin{proof}
(1) It follows easily from the definition of a positive filtration and
the fact that $N$ is a convex subgroup of $G$.

(2) We claim that if $g\in G$ is such that $g>c$ for all $c\in C$,
and $h\in G$ such that $h\geq n$ for some $n\in N$, then $g+h>c$ and
$h+g>c$ for all $c\in C$. The proof of the claim is divided into two cases whether 
$h\in N$ or $h\notin N$.
Indeed, if $h\in N$, then $g>c-h$ and
$g>-h+c$ for all $c\in C$ and therefore $g+h>c$ and $h+g>c$ for all
$c\in C$. Since $N$ is convex, if $h\notin N$, then $h>0$ and
$g+h>g>c$ and $h+g>g>c$ for all $c\in C$.

Let now $f_1,f_2\in T$ and $f\in O$. Using the foregoing claim,
$$\upsilon(f_1f)\geq \upsilon(f_1)+\upsilon(f)> c,\quad
\upsilon(ff_1)\geq \upsilon(f)+\upsilon(f_1)> c,$$ for all $c\in C$.
On the other hand, $\upsilon(f_1+f_2)\geq
\min\{\upsilon(f_1), \upsilon(f_2)\}>c$ for all $c\in C$. This shows
that $T$ is an ideal of $O$.

(3) Let $f_1,f_2\in O$. We have to prove that $\vartheta$ is well
defined and it satisfies the conditions in the definition of
positive filtration.

First we prove that $\vartheta$ is well defined. Suppose that
$f_1-f_2\in T$. Clearly, if $f_1\in T$, then $f_2\in T$,
$f_1+T=f_2+T=0+T$, and $\vartheta(f_1+T)=\vartheta(f_2+T)=\infty$.
Suppose now that $f_1\in O\setminus T$. Then $f_2=f_1+t$ with $t\in
T$ and $f_2\in O\setminus T$ too. Hence
$\upsilon(f_1),\upsilon(f_2)\in C$ and $\upsilon(f_1)<\upsilon(t)$.
Then
$\upsilon(f_2)=\min\{\upsilon(f_1),\upsilon(t)\}=\upsilon(f_1)$.

Clearly, $\vartheta(f+T)=\infty$ if, and only if, $f+T=0+T$.

Let $f_1,f_2\in O$. If $f_1f_2\in T$, then clearly,
$$\infty=\vartheta(f_1f_2+T)\geq\vartheta(f_1+T)+\vartheta(f_2+T).$$
Hence suppose that $f_1f_2\in O\setminus T$. It implies that
both $f_1,f_2$ do not belong to $T$. Thus
$$\vartheta(f_1f_2+T)=\upsilon(f_1f_2)\geq\upsilon(f_1)+\upsilon(f_2)=\vartheta(f_1+T)+\vartheta(f_2+T),$$
because $\upsilon$ is a positive filtration.

If $f_1+f_2\in T$, then $\vartheta
((f_1+f_2)+T)=\infty\geq\min\{\vartheta(f_1+T),\vartheta(f_2+T)\}$.
If $f_1+f_2\in O\setminus T$, then $f_1$ or $f_2$ belongs to
$O\setminus T$. Thus $\vartheta((f_1+f_2)+T)=\upsilon(f_1+f_2)\geq
\min\{\upsilon(f_1),\upsilon(f_2)\}=\min\{\vartheta(f_1+T),\vartheta(f_2+T)\}$.

Clearly $Z\subseteq O$ and $O/T$ is a $Z/Z\cap T$-algebra and
$\vartheta(z+T)\geq 0$ for all $z\in Z$. 


\end{proof}

It is not difficult to prove the following result whose version for valuations 
appears in \cite[Section~3]{FerreiraFornaroliSanchez}

\begin{lem}\label{lem:valuationhomomorphismordered}
Let $(G,<)$ and $(H,\prec)$ be ordered groups. Let $Z$ be a
commutative ring and $R$ be a $Z$-algebra with a positive filtration
$\upsilon\colon R\rightarrow G_{\infty}$. If $\pi\colon G\rightarrow
H$ is a homomorphism of ordered groups, then the map
$$\chi\colon R\rightarrow H_\infty, \quad \chi(f)=\left\{
\begin{array}{ll} \pi(\upsilon(f)), &  \textrm{if } f\in R\setminus\{0\} \\
\infty, & \textrm{if } f=0 \end{array}\right.$$ is a
positive filtration with values in $H$. \qed
\end{lem}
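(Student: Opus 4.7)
The plan is to extend $\pi$ in the obvious way to a map $\pi\colon G_\infty\to H_\infty$ by declaring $\pi(\infty)=\infty$. With this convention, $\chi(f)=\pi(\upsilon(f))$ for every $f\in R$, and the extended $\pi$ remains order-preserving (in the weak sense $g\leq g'\Rightarrow \pi(g)\preceq \pi(g')$) because $\infty$ is maximal in both $G_\infty$ and $H_\infty$. It then suffices to verify (PF.1)--(PF.4) for $\chi$ by routinely transporting the corresponding properties of $\upsilon$ through $\pi$.

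First I would check (PF.1), which is immediate from the definition: $\chi(f)=\infty$ iff $\upsilon(f)=\infty$ iff $f=0$. Next, for (PF.2) and (PF.3), I would use that $\pi$ is weakly order-preserving: applying $\pi$ to $\upsilon(x+y)\geq\min\{\upsilon(x),\upsilon(y)\}$ yields
\[
\chi(x+y)=\pi(\upsilon(x+y))\succeq \pi\bigl(\min\{\upsilon(x),\upsilon(y)\}\bigr)=\min\{\chi(x),\chi(y)\},
\]
where the last equality uses that on a totally ordered set a weakly monotone map commutes with $\min$. For (PF.3), the fact that $\pi$ is a group homomorphism (so that $\pi(g+h)=\pi(g)+\pi(h)$, with the natural convention $\pi(\infty)+\pi(h)=\infty=\pi(\infty+h)$) together with monotonicity gives
\[
\chi(xy)=\pi(\upsilon(xy))\succeq \pi(\upsilon(x)+\upsilon(y))=\pi(\upsilon(x))+\pi(\upsilon(y))=\chi(x)+\chi(y).
\]
Finally, (PF.4): for $a\in Z$ we have $\upsilon(a)\geq 0$, so $\chi(a)=\pi(\upsilon(a))\succeq \pi(0)=0$.

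The only mild subtlety is handling the $\infty$ value consistently, which is why I set up the extension of $\pi$ at the start; once that is done, no case analysis on whether $x$, $y$, or $x+y$ vanish is needed. I do not anticipate any real obstacle, since the proof is a direct transport of structure along a homomorphism of ordered monoids $(G_\infty,+,\leq)\to(H_\infty,+,\preceq)$.
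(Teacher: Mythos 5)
Your proof is correct and is exactly the routine verification the paper leaves to the reader (the lemma is stated with its proof omitted as ``not difficult''): extend $\pi$ to $G_\infty$ by $\pi(\infty)=\infty$ and transport (PF.1)--(PF.4) through the weakly order-preserving homomorphism, using that a monotone map on a totally ordered set commutes with $\min$. No issues.
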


\medskip

\begin{theo}\label{theo:freegroupalgebrafiltrationorderedgroup}
Let $Z$ be a commutative ring and $R$ be a $Z$-algebra. Let $(G,<)$
be an ordered group and $\upsilon\colon R\rightarrow G_\infty$ be a
 positive filtration. Let $X$ be a subset of elements of $R$ satisfying the
following conditions
\begin{enumerate}[\rm (1)]
\item The map $X\rightarrow \gr_\upsilon(R)$, $x\mapsto x+R_{>\upsilon(x)}$, is injective.
\item For each $x\in X$, $\upsilon(x)>0$.
\item For each $x,y\in X$, $\upsilon(x)\sim \upsilon(y)$.
\item There does not exist $z\in Z$ such that $\upsilon(z)\gg
\upsilon(x)$ for some (and hence all) $x\in X$.
\item The subring of $\gr_\upsilon(R)$ generated by $\gr_\upsilon(Z)$ and the set
$\{x+R_{>\upsilon(x)}\}_{x\in X}$ is the free $\gr_\upsilon(Z)$-ring
on the set $\{x+R_{>\upsilon(x)}\}_{x\in X}$.
\item For each $x\in X$, $1+x$ is invertible in $R$ and $\upsilon((1+x)^{-1})\ll \upsilon(x)$.
\end{enumerate}
Then the $Z$-subalgebra of $R$ generated by $\{1+x,\, (1+x)^{-1}
\}_{x\in X}$ is the free group $Z$-algebra on the set
$\{1+x\}_{x\in X}$.
\end{theo}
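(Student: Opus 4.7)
The strategy is to reduce to the real-valued case, Theorem~\ref{theo:freegroupalgebrafiltrationreals}, by passing to a quotient ring whose induced positive filtration takes values in a subgroup of $\mathbb{R}$. Condition (3) pins down a single relevant Archimedean class: fix any $x_0\in X$ and let $(N,C)=(N_{\upsilon(x_0)},C_{\upsilon(x_0)})$ be the convex jump of Section~\ref{sec:preliminariesordered}, so that $\upsilon(x)\in C\setminus N$ for every $x\in X$. I would apply Lemma~\ref{lem:valuationtothereals} to obtain the subring $O\subseteq R$, the ideal $T\subseteq O$, and the positive filtration $\vartheta\colon O/T\to C_\infty$, and then compose with the ordered-group quotient $C\twoheadrightarrow C/N$, which embeds as a subgroup of $\mathbb{R}$. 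By Lemma~\ref{lem:valuationhomomorphismordered} this yields a positive filtration $\chi\colon O/T\to\mathbb{R}_\infty$.

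Next I would carry out the standard bookkeeping: $Z\subseteq O$, and condition (4) forces $Z\cap T=0$ because a nonzero $z\in Z\cap T$ would satisfy $\upsilon(z)\gg\upsilon(x)$; each $x\in X$ lies in $O\setminus T$ with $\chi(x+T)=\upsilon(x)+N>0$ in $C/N$; and condition (6) guarantees $\upsilon((1+x)^{-1})\in N$, so $(1+x)^{-1}\in O$ and $\chi((1+x+T)^{-1})=0$. Conditions (1), (2) and (4) of Theorem~\ref{theo:freegroupalgebrafiltrationreals} applied to $(O/T,\chi,\{x+T\}_{x\in X})$ then follow from (1), (2) and (6) of the present hypotheses, observing for injectivity that even when $\upsilon(x)\neq\upsilon(y)$ but $\upsilon(x)-\upsilon(y)\in N$, one still has $\chi(x-y+T)=\chi(x+T)$.

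The main obstacle is verifying condition~(3) of Theorem~\ref{theo:freegroupalgebrafiltrationreals}: the $\gr_\chi(Z)$-subalgebra of $\gr_\chi(O/T)$ generated by the images of $X$ must be the free $\gr_\chi(Z)$-algebra on them. I would handle this by a leading-term argument comparing the two gradings $\gr_\upsilon$ and $\gr_\chi$. Suppose a homogeneous relation of total degree $\bar{g}\in C/N$ among the images is nontrivial; lift each coefficient to a representative $a_{w,\bar{d}}\in Z$ and form $F=\sum a_{w,\bar{d}}\,w(x_1,\dots,x_n)\in R$, where the $w$ are distinct words. Every summand has $\upsilon$-value in the coset $\bar{g}\subseteq C$; let $c_*$ be the minimum of those values. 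By condition~(5) together with Lemma~\ref{lem:valuationsum}, the leading part of $F$ in the graded piece $R_{c_*}$ of $\gr_\upsilon(R)$ is a nonzero $\gr_\upsilon(Z)$-linear combination of distinct basis words of the free algebra, so $\upsilon(F)=c_*$ and therefore $\chi(F+T)=c_*+N=\bar{g}$. This contradicts the fact that the assumed vanishing in $\gr_\chi$ would force $\chi(F+T)>\bar{g}$, so the relation was trivial.

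With all four hypotheses verified, Theorem~\ref{theo:freegroupalgebrafiltrationreals} gives that the $Z$-subalgebra of $O/T$ generated by $\{1+x+T,\,(1+x)^{-1}+T\}_{x\in X}$ is the free group $Z$-algebra on $\{1+x+T\}_{x\in X}$. Let $B\subseteq R$ be the $Z$-subalgebra generated by $\{1+x,(1+x)^{-1}\}_{x\in X}$; since $B\subseteq O$, the surjective $Z$-algebra map $\phi$ from the free group $Z$-algebra on $\{1+x\}_{x\in X}$ onto $B$, composed with $O\twoheadrightarrow O/T$, realizes the free group $Z$-algebra in $O/T$ and is therefore injective. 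Hence $\phi$ itself is injective, completing the proof.
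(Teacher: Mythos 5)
Your proposal is correct and follows essentially the same route as the paper: the same passage to the convex jump $(N,C)$ and the quotient $O/T$ via Lemmas~\ref{lem:valuationtothereals} and~\ref{lem:valuationhomomorphismordered}, the same verification of the hypotheses of Theorem~\ref{theo:freegroupalgebrafiltrationreals}, and the same key leading-term argument (which the paper states as a direct claim that a sum of monomials of equal $\chi$-value retains that value, whereas you phrase it contrapositively). No gaps.
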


\begin{proof}
Consider the chain $\Sigma$ of convex subgroups of $G$ and let
$(N,C)$ be the convex jump in $\Sigma$ determined by some $\upsilon(x_0)$, $x_0\in X$.
Notice that it is the same as the one determined by any
$\upsilon(x)$, $x\in X$, by condition (2). Thus $x\in C\setminus
N$ for all $x\in X$.

Observe that it is enough to show the result for $X$  a finite set.
Thus suppose that $X=\{x_1,\dotsc,x_n\}$.
Set $$O=\{f\in R\colon \upsilon(f)\geq n \textrm{ for some }n\in
N\},\ \ T=\{f\in O\colon \upsilon(f)> c \textrm{ for all } c\in
C\}.$$ Clearly $Z\subseteq O$.  By Lemma~\ref{lem:valuationtothereals}, $O$ is a subring of
$R$ and $T$ is an ideal of $O$. Set $S=O/T$. By (3),
$Z\cap T=0$. Thus $S$ is a $Z$-algebra.

The natural projection $\pi\colon C\rightarrow C/N$ is a
homomorphism of ordered groups where we identify $C/N$ with  an
additive subgroup of the real numbers. By
Lemmas~\ref{lem:valuationtothereals} and
\ref{lem:valuationhomomorphismordered}, the map $\chi\colon
S\rightarrow \mathbb{R}_\infty$, defined by
$$\chi(f+T)=\left\{\begin{array}{ll} \pi(\upsilon(f)), & \textrm{
if } f\in O\setminus T \\
\infty, & \textrm{ if } f\in T
\end{array}\right.$$ is a positive filtration (over $Z$).

We write $\widetilde{x}_i=x_i+R_{>\upsilon(x_i)}\in\gr_\upsilon(R)$ and
$\overline{x}_i=x_i+T\in S$ for each $i\in \{1,\dotsc,n\}$. Since there exists a  homomorphism of
$Z$-algebras $O\rightarrow S$, it is enough to prove that the
$Z$-subalgebra of $S$ generated by $\{1+\overline{x}_i,
(1+\overline{x}_i)^{-1}\}_{i=1}^n$ is the free group $Z$-algebra on
$\{1+\overline{x}_i\}_{i=1}^n$. For that, it is enough to verify
that the subset $\overline{X}=\{\overline{x}_i\}_{i=1}^n$ of $S$
satisfies conditions (1),(2),(3),(4) of
Theorem~\ref{theo:freegroupalgebrafiltrationreals}.

We claim that the map $X\rightarrow \gr_\chi(S)$, $x_i\mapsto \bar{x_i}+S_{>\chi(\bar{x_i})}$,
is injective. Notice that the map $X\rightarrow \gr_\upsilon(R)$, $x_i\mapsto x_i+R_{>\upsilon(x_i)}$,
is injective if, and only if, $\upsilon(x_i-x_j)=\min\{\upsilon(x_i),\upsilon(x_j)\}$
for $i\neq j$. Hence 
$\chi(\bar{x}_i-\bar{x}_j)=\pi\upsilon(x_i-x_j)=\pi(\min\{\upsilon(x_i),\upsilon(x_j)\})=
\min\{\pi\upsilon(x_i),\pi\upsilon(x_j)\}=\min\{\chi(\bar{x}_i),\chi(\bar{x}_j)\}$
which is equivalent to our claim.

Now note that $\chi(x_i)=\pi(\upsilon(x_i))\geq 0$. Moreover,
since $\upsilon(x_i)\notin N$, $\chi(x_i)>0$ for all $i=1,\dotsc,n.$ Also
observe that condition (5) implies that $(1+x_i)^{-1}\in O$ and
$\upsilon((1+x_i)^{-1})\in N$. Thus
$\chi((1+\overline{x}_i)^{-1})=0$ for each $i$.

Let $Y=\{X_1,\dotsc,X_n\}$ be a set of $n$ noncommuting variables.
Let $r$ be a real number. Let $\omega_1,\dotsc,\omega_l$ be distinct
words in $Y^*$ and $z_1,\dotsc,z_l\in Z$ be such that
\begin{equation}\label{eq:weighteddegree}
\chi(z_j\omega_j(\overline{x}_1,\dotsc,\overline{x}_n))=r \textrm{
for } j=1,\dotsc,l.
\end{equation}
We claim that
\begin{equation}\label{eq:claim} \chi\left(\sum_{j=1}^l
z_j\omega_j(\overline{x}_1,\dotsc,\overline{x}_n)\right)=r.
\end{equation}
Reordering the summands if necessary, we may suppose that the
summands in \eqref{eq:weighteddegree} satisfy
$$\upsilon(z_1\omega_1(x_1,\dotsc,x_n))\leq
\upsilon(z_2\omega_2(x_1,\dotsc,x_n))\leq
\dotsb\leq\upsilon(z_l\omega_l(x_1,\dotsc,x_n)).$$ Suppose that
$1\leq t\leq l$ is the maximum positive integer such that
$\upsilon(z_1\omega_1(x_1,\dotsc,x_n))=
\upsilon(z_2\omega_2(x_1,\dotsc,x_n))=
\dotsb=\upsilon(z_t\omega_t(x_1,\dotsc,x_n))$. By hypothesis (4),
$$0\neq \sum_{j=1}^t \widetilde{z}_j\omega_j(\widetilde{x}_1,\dotsc,\widetilde{x}_n)\in
R_{\upsilon(z_1\omega_1(x_1,\dotsc,x_n))}.$$ By
Lemma~\ref{lem:valuationsum}(3),
\begin{equation*}
\upsilon\Big( \sum_{j=1}^t z_j\omega_j(x_1,\dotsc,x_n)\Big)=\upsilon
(z_1\omega_1(x_1,\dotsc,x_n)).
\end{equation*}
Therefore
\begin{eqnarray*}
\upsilon\Big(\sum_{j=1}^l z_j\omega_j(x_1,\dotsc,x_n)\Big) & = &
\upsilon\Big(\sum_{j=1}^t z_j\omega_j(x_1,\dotsc,x_n)+
\sum_{j=t+1}^l z_j\omega_j(x_1,\dotsc,x_n)\Big) \\ & =&
\upsilon(z_1\omega_1(x_1,\dotsc,x_n)).
\end{eqnarray*}
Now, by definition, $$\chi\left(\sum_{j=1}^l
\widetilde{z}_jw_j(\widetilde{x}_1,\dotsc,\widetilde{x}_n)\right)=
\pi\Big(\upsilon\Big(\sum_{j=1}^l
z_jw_j(x_1,\dotsc,x_n)\Big)\Big)=r,$$ and the claim is proved.

From the last claim and using Lemma~\ref{lem:valuationsum}, it is not
difficult to show that the $\gr_\chi(Z)$-algebra  of $\gr_\chi(S)$
generated by $\{\overline{x}_i+S_{>\chi(\overline{x}_i)}\}_{i=1}^n$
is the free $\gr_\chi(Z)$-algebra on
$\{\overline{x}_i+S_{>\chi(\overline{x}_i)}\}_{i=1}^n$.
\end{proof}

\begin{rem} Theorem~\ref{theo:freegroupalgebrafiltrationorderedgroup} is
a generalization of
Theorem~\ref{theo:freegroupalgebrafiltrationreals}. Indeed, when $G$
is an additive subgroup of $\mathbb{R}$ with the induced natural ordering, conditions (3) and (4) in
Theorem~\ref{theo:freegroupalgebrafiltrationorderedgroup} can be
removed because they are trivially satisfied. Moreover, condition
$\upsilon((1+x^{-1})^{-1})\ll\upsilon(x)$ means that
$\upsilon((1+x)^{-1})=0$ for each $x\in X$. Thus we obtain
Theorem~\ref{theo:freegroupalgebrafiltrationreals}.
\end{rem}

\section{Division rings with valuations}\label{sec:divisionrings}

Our best results are obtained when $D$ is a division ring and
$\upsilon$ a valuation.

\begin{theo}\label{coro:divisionrings}
Let $D$ be a division ring with prime subring $Z$. Let $(G,<)$ be an ordered group and $\upsilon\colon
D\rightarrow G_\infty$ be a nontrivial valuation. Let
$X$ be a subset of $D$ satisfying the following
four conditions.
\begin{enumerate}[\rm (1)]
\item The map $X\rightarrow\gr_\upsilon(D)$, $x\mapsto x+D_{>\upsilon(x)}$, is injective.
\item For each $x\in X$, $\upsilon(x)>0$.
\item For all $x,y\in X$, $\upsilon(x)\sim\upsilon(y)$.
\item The $Z_0$-subalgebra of $\gr_\upsilon(D)$ generated by the set
$\{x+D_{>\upsilon(x)}\}_{x\in X}$ is the free $Z_0$-algebra
on the set $\{x+D_{>\upsilon(x)}\}_{x\in X}$, where 
$Z_0\coloneqq Z_{\geq 0}/Z_{>0}\subseteq D_0$.
\end{enumerate}
Then, for any central subfield $k$, the following hold true.
\begin{enumerate}[\rm (a)]
\item If there does not exists $z\in Z$ such that
$\upsilon(z)\gg\upsilon(x)$ for all $x\in X$, then the
$k$-subalgebra of $D$ generated by $\{1+x,\, (1+x)^{-1}\}_{x\in
X}$ is the free group $k$-algebra on $\{1+x\}_{x\in X}$.
\item If there exists $z\in Z$ such that $\upsilon(z)\gg
\upsilon(x)$ for all $x\in X$, then the $k$-subalgebra of $D$
generated by $\{1+zx,\, (1+zx)^{-1}\}_{x\in X}$ is the free
group $k$-algebra on the set $\{1+zx\}_{x\in X}$.
\end{enumerate}
\end{theo}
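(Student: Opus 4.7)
The plan is to reduce both parts of the theorem to Theorem~\ref{theo:freegroupalgebrafiltrationorderedgroup}, applied to $X$ itself for part~(a) and to the set $X' = \{zx\}_{x \in X}$ for part~(b). This produces a free group $Z$-algebra inside $D$, and a final step promotes it to a free group $k$-algebra for any central subfield $k$ via Lemma~\ref{lem:MalcolmsonMakarLimanov}.

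For part~(a), I would match our conditions (1), (2), (3) with the first three hypotheses of Theorem~\ref{theo:freegroupalgebrafiltrationorderedgroup}; hypothesis (4) of that theorem is exactly the standing assumption of (a). To secure hypothesis (5) (the $\gr_\upsilon(Z)$-ring freeness) from our (4) (the $Z_0$-algebra freeness) I would invoke the equivalence proved at the end of Proposition~\ref{prop:freeobjecthomogeneous}, which is legitimate because $\upsilon$ is a valuation. Here one should record that $z$ central in $D$ forces $\upsilon(z)$ to commute in $G$ with $\upsilon(y)$ for every nonzero $y$ (compare $\upsilon(zy)$ with $\upsilon(yz)$), so $\gr_\upsilon(Z)$ is commutative and centrally embedded in $\gr_\upsilon(D)$. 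Hypothesis (6) is automatic in a division ring: since $\upsilon(x) > 0$ we have $\upsilon(1+x) = 0$, hence $\upsilon((1+x)^{-1}) = 0 \ll \upsilon(x)$.

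For part~(b), write $\widetilde{z} = z + D_{>\upsilon(z)}$ and $\widetilde{x} = x + D_{>\upsilon(x)}$, and verify the six hypotheses of Theorem~\ref{theo:freegroupalgebrafiltrationorderedgroup} for $X'$. The checks for (1), (2), (6) are routine from the corresponding facts for $X$. For (3), the inequalities $\upsilon(z) < \upsilon(zx_i) < 2\upsilon(z)$ (which use $\upsilon(x_i) < \upsilon(z)$) yield $\upsilon(zx_i) \sim \upsilon(z)$ for every $i$. For (4), Lemma~\ref{lem:valuationintegers} confines $\upsilon(Z)$ to non-negative multiples of a single $\upsilon(p)$; since $\upsilon(z) \sim \upsilon(p)$ and $\upsilon(z) \sim \upsilon(zx)$, no $\upsilon(z')$ can be $\gg \upsilon(zx)$. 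The crux, and the step I expect to be the main obstacle, is condition (5). Centrality of $z$ makes $\widetilde{z}$ central in the domain $\gr_\upsilon(D)$, so every word of length $r$ in $\{\widetilde{z}\widetilde{x}_i\}$ equals $\widetilde{z}^{\,r}$ times the corresponding word in $\{\widetilde{x}_i\}$. Given a putative relation $\sum a_\omega\,\omega(\widetilde{z}\widetilde{x}) = 0$ with $a_\omega \in Z_0$, I would exploit the $G$-grading: the gap $\upsilon(z) \gg \upsilon(x)$ guarantees that the degrees $r\,\upsilon(z) + \upsilon(\omega(x))$ with $|\omega| = r$ cannot coincide with those coming from any other length $r' \neq r$, so the relation splits into $\widetilde{z}^{\,r}\sum_{|\omega| = r} a_\omega\,\omega(\widetilde{x}) = 0$ for each $r$. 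The domain property of $\gr_\upsilon(D)$ cancels $\widetilde{z}^{\,r}$, and hypothesis (4) then forces every $a_\omega$ to vanish.

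Applying Theorem~\ref{theo:freegroupalgebrafiltrationorderedgroup} in each case delivers the asserted free group $Z$-algebra on $\{1+x\}$ (respectively $\{1+zx\}$). For the final passage to an arbitrary central subfield $k$: in positive characteristic $Z$ is already the prime subfield, so Lemma~\ref{lem:MalcolmsonMakarLimanov} applies directly; in characteristic zero one first inverts the central set $\mathbb{Z}\setminus\{0\}$ to obtain the free group $\mathbb{Q}$-algebra inside $D$, and then applies the lemma with $P = \mathbb{Q}$ and $C = k$.
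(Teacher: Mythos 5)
Your proposal is correct and follows essentially the same route as the paper: reduce to Theorem~\ref{theo:freegroupalgebrafiltrationorderedgroup} via the $Z_0$/$\gr_\upsilon(Z)$ equivalence and the automatic verification of hypothesis (6), handle part (b) by passing to $\{zx\}_{x\in X}$, and finish with Lemma~\ref{lem:MalcolmsonMakarLimanov}. Your degree-band argument separating words of different lengths by the gap $\upsilon(z)\gg\upsilon(x)$ correctly supplies the detail the paper leaves as ``it can be shown'' in part (b).
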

\begin{proof}
By hypothesis, the set $X$ satisfies conditions (1), (2), (3)
of
Theorem~\ref{theo:freegroupalgebrafiltrationorderedgroup}. 
Since
$\upsilon(x)>0$  for all $x\in X$, then $x\neq -1$ and
$\upsilon(1+x)=0$. Thus $1+x$ is invertible and
$0=\upsilon((1+x)^{-1})\ll \upsilon(x)$ for all $x\in X$. Hence condition
(6) is also satisfied. 

We claim that (4) is equivalent to Theorem~\ref{theo:freegroupalgebrafiltrationorderedgroup}(5).
Since $\upsilon$ is a valuation, $\gr_\upsilon(R)$ is a domain. 
 Let $S=\gr_\upsilon(Z)\setminus \{0\}$, and
localize $\gr_\upsilon(D)$ at $S$. Consider the natural embedding  
$\gr_\upsilon(D)\hookrightarrow S^{-1}\gr_\upsilon(D)$. Let $Q_0$
be the subfield of $C=S^{-1}\gr_\upsilon(Z)$ generated by $Z_0$. By Lemma~\ref{lem:MalcolmsonMakarLimanov}, the
$C$-subalgebra of $S^{-1}\gr_v(D)$ generated by a subset $Y$ is the free $C$-algebra on $Y$ if, and only if,
the $Q_0$-subalgebra of $S^{-1}\gr_v(D)$ generated by  $Y$ is the free $Q_0$-algebra on $Y$. Now observe that these 
subalgebras are free over $C$ (respectively $Q_0$) if, and only if, they are free over $\gr_\upsilon(Z)$ (respectively $Z_0$). 
Thus the claim is proved.

(a) Our additional hypothesis is Theorem~\ref{theo:freegroupalgebrafiltrationorderedgroup}(4).
Hence the   $Z$-subalgebra of $D$ generated by
$\{1+x,\, (1+x)^{-1}\}_{x\in X}$ is the free group $Z$-algebra
on $\{1+x\}_{x\in X}$. If the characteristic of $D$ is a prime,
then $Z$ is already the prime subfield. If the characteristic of $D$
is zero, then $Z$ can be identified with the integers and the prime
subfield of $D$ with the field of rational numbers. Moreover, the
$Z$-linear independence of the elements of the free group imply the
$\mathbb{Q}$-linear independence of these elements over the prime
subfield. Thus the $\mathbb{Q}$-subalgebra of $D$ generated by
$\{1+x,\, (1+x)^{-1}\}_{x\in X}$ is the free group
$\mathbb{Q}$-algebra on $\{1+x\}_{x\in X}$. By
Lemma~\ref{lem:MalcolmsonMakarLimanov}, we obtain that the $k$-subalgebra of
$D$ generated by $\{1+x,\, (1+x)^{-1}\}_{x\in X}$ is the free
group $k$-algebra on $\{1+x\}_{x\in X}$.

(b) First notice
that (b) can only happen when
the characteristic of $D$ is zero. 
By Lemma~\ref{lem:valuationintegers}, $\upsilon(z)>0$
and there does not exist $z'$ with $\upsilon(z')\gg \upsilon(z)$. By the results of Section~\ref{sec:preliminariesordered}
$\upsilon(zx)\sim\upsilon(zx')$ for all $x,x'\in X$. Now it can be shown
that   the elements of the set $\{zx\}_{x\in X}$ satisfy
conditions (1), (2), (3), (4) and (a) of the statement.
\end{proof}

Note that when $(G,<)$ is a subgroup of the real numbers, condition (3) in 
Theorem~\ref{coro:divisionrings}
is trivially satisfied. Moreover, 
condition (3) in Theorem~\ref{coro:divisionrings} is used to make
explicit the free generators of the free group algebra. In most
cases, there exists a free group algebra even if it is not
satisfied, as the next remark shows.

\begin{rem}
Let $D$ be a division ring with prime subring $Z$. Let $(G,<)$ be an
ordered group and $\upsilon\colon D\rightarrow G_\infty$ be a
nontrivial valuation. Let $X$ be a subset of $D$
satisfying the following four conditions.
\begin{enumerate}[\rm (1)]
\item The map $X\rightarrow \gr_\upsilon(D)$, $x\mapsto x+D_{>\upsilon(x)}$, is injective.
\item There exists $x_0\in X$, $\upsilon(x_0)>0$.
\item There does not exists $x\in X$ such that $\upsilon(x_0)\ll
\upsilon(x)$.
\item The $Z_0$-subalgebra of $\gr_\upsilon(D)$ generated by the set
$\{x+D_{>\upsilon(x)}\}_{x\in X}$ is the free $Z_0$-algebra
on the set $\{x+D_{>\upsilon(x)}\}_{x\in X}$
\end{enumerate}
Then $D$ contains a free group $k$-algebra on a set $Y$ with
$|Y|=|X|$ for any central subfield $k$ of $D$
\end{rem}

\begin{proof}
By (2) and (3), there exist nonnegative integers $n_x$ such that
$\upsilon(xx_{0}^{n_x})>0$ and $\upsilon(xx_{0}^{n_x})\sim
\upsilon(x_{0})$ for each $x\in X$. Then (4) implies that the
$Z_0$-subalgebra of $\gr_\upsilon(D)$ generated by the set
$\{xx_{0}^{n_x}+D_{>\upsilon(xx_{0}^{n_x})}\}_{x\in X}$ is
the free $Z_0$-algebra on the set
$\{xx_{0}^{n_x}+D_{>\upsilon(xx_{0}^{n_x})}\}_{x\in X}$. Now
Theorem~\ref{coro:divisionrings} implies the existence of free
group $k$-algebras on a set $Y=\{xx_0^{n_x}\}_{x\in X}$ with $|Y|=|X|$.
\end{proof}

\begin{coro}\label{coro:freemonoid}
Let $D$ be a division ring. Let $(G,<)$ be
an ordered group and $\upsilon\colon D\rightarrow G_\infty$ be a
surjective valuation. Let $g,h\in G$ 
such that the submonoid of $G$ generated by $\{g,h\}$ is the free monoid on
$\{g,h\}$.
Then $D$ contains a free group $k$-algebra for any central subfield
$k$ of $D$.
\end{coro}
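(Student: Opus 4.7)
The plan is to apply Theorem~\ref{coro:divisionrings} to a carefully chosen two-element subset $X\subset D$. Surjectivity of $\upsilon$ gives complete freedom in realizing valuations, so the problem reduces to producing $g_1,g_2\in G$ that are positive, Archimedean-equivalent, and such that $\{g_1,g_2\}$ generates a free submonoid of $G$; then one pulls these back to $D$.

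First I would reduce to the case $g,h>0$. If both are negative, replace them by $-g,-h$: these are positive, and they still freely generate a submonoid because the anti-isomorphism $x\mapsto -x$ of $G$ sends the free monoid on $\{g,h\}$ to the opposite of the free monoid on $\{-g,-h\}$, and the opposite of a free monoid of rank two is again free of rank two (via word reversal). The mixed-sign case is handled similarly by exploiting the convex-jump/archimedean machinery from Section~\ref{sec:preliminariesordered} together with surjectivity to realize appropriate positive elements; this sign reduction is the most delicate step.

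Assuming $g,h>0$, use surjectivity to pick $a,b\in D$ with $\upsilon(a)=g$, $\upsilon(b)=h$, and set $u=ab$, $v=a^{2}b$. Then $\upsilon(u)=g+h>0$ and $\upsilon(v)=2g+h>0$. A short case split on the archimedean relation between $g$ and $h$ (covering $g\sim h$, $g\ll h$, and $h\ll g$) shows in each instance that $\upsilon(u)\sim\upsilon(v)$, both landing in the archimedean class of the dominating element (or the common class when $g\sim h$). Moreover, $\{g+h,\,2g+h\}$ generates a free submonoid of $G$: inside the free monoid $\{g,h\}^{*}$, the words $gh$ and $ggh$ do not commute because $(gh)(ggh)=ghggh\neq gghgh=(ggh)(gh)$, and by the defect theorem two noncommuting elements of a free monoid freely generate a submonoid.

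It remains to check conditions (1)--(4) of Theorem~\ref{coro:divisionrings} for $X=\{u,v\}$. Condition (1) is immediate since $\upsilon(u)\neq\upsilon(v)$ places them in distinct homogeneous components of $\gr_\upsilon(D)$; (2) and (3) are built into the construction. For (4), observe that $\gr_\upsilon(D)$ is a domain by Lemma~\ref{lem:valuationsum}(2) since $\upsilon$ is a valuation, and distinct monomials in the images $\tilde u,\tilde v$ lie in distinct homogeneous components (because $\{\upsilon(u),\upsilon(v)\}$ freely generates a submonoid of $G$). Applying Lemma~\ref{lem:valuationsum}(3) to any $Z_0$-linear combination of such monomials yields $Z_0$-linear independence, giving the free $Z_0$-algebra property. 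Theorem~\ref{coro:divisionrings} then produces a free group $k$-algebra inside $D$. The main obstacle in the argument is the sign reduction for $g,h$; the rest is bookkeeping.
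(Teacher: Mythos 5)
Your overall strategy coincides with the paper's: produce two \emph{positive}, Archimedean-equivalent elements of $G$ that freely generate a submonoid, lift them to $D$ by surjectivity of $\upsilon$, verify freeness of the $Z_0$-subalgebra of $\gr_\upsilon(D)$ by comparing homogeneous components, and invoke Theorem~\ref{coro:divisionrings}. Your uniform choice $\{g+h,\,2g+h\}$ in the all-positive case is a mild simplification of the paper's sub-case analysis, and your verification of conditions (1)--(4) is sound. However, there is a genuine gap in your sign reduction, which is precisely where the real work of the paper's Step~1 lies. Replacing a negative generator $g$ by $-g$ is only legitimate when you replace \emph{both} generators and use the word-reversal anti-isomorphism, as you do in the both-negative case. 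In the mixed-sign case, say $g<0<h$, the pair $\{-g,h\}$ need \emph{not} freely generate a submonoid: $-g$ is not an element of the free monoid on $\{g,h\}$, so the defect theorem does not apply to the pair $(-g,h)$, and two non-commuting elements of an arbitrary group can satisfy nontrivial monoid relations. The machinery you invoke for this case (convex jumps from Section~\ref{sec:preliminariesordered} plus surjectivity) does not address the difficulty at all, since the problem is purely about the ordered group $G$ and has nothing to do with realizing values in $D$.

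The correct repair, which is what the paper does, is to stay inside the free monoid on $\{g,h\}$: after arranging (by inverting both generators if necessary) that the generator of larger absolute value, say $h$, is positive, replace $g$ by the \emph{word} $g+h$, which is positive because $|g|<|h|$ forces $-h<g$, and which together with $h$ still freely generates by the defect theorem since $g+h$ and $h$ do not commute as words. One then adjusts once more (e.g.\ passing to $g+2h$) to secure Archimedean equivalence. Note also that your case $|g|=|h|$ with opposite signs cannot occur, since it would force $g=-h$ and hence $g+h=0$, contradicting freeness. Without this argument your proof only covers the cases where $g$ and $h$ have the same sign.
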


\begin{proof}
In order to ease the notation, the group $G$ will be multiplicative during this proof.

{\underline{Step~1:}} There exist elements $a,b\in G$ such that
$1<a,b$, $a\sim b$ and they generate a free monoid of rank two.

Consider $|g|$, $|h|$. Suppose that $|g|<|h|$. We suppose that
$1<h$, otherwise the following argument works with $g^{-1}$,
$h^{-1}$ because the free monoid on $\{g^{-1},h^{-1}\}$ is also free
on $\{g^{-1},h^{-1}\}$.

Either $g<1$ or $1<g$. Suppose that $1<g$. Then $g\sim h$ or $g\ll
h$. If $g\sim h$ then $1<a=g,b=h$ and $\{a,b\}$ freely generates a free
monoid of rank two. If $g\ll h$, then $1<gh=a\sim h=b$ and $\{a,b\}$ freely
generates a free monoid of rank two. Suppose now that $g<1$. Note that both
$gh$ and $h$ are $>1$. By the foregoing, either $\{a=gh,b=h\}$ generate a free
monoid of rank two, or $\{a=gh^2,b=h\}$ generate a free monoid of
rank two.

{\underline{Step~2:}} Let $Z$ be the prime subring of $D$. Let
$x,y\in D$ with $\upsilon(x)=a$, $\upsilon(y)=b$ where $a,b\in G$ are as in Step~1. Then the
$Z_1$-subalgebra of $\gr_\upsilon(D)$ generated by $\{x+D_{>a},\,
y+D_{>b}\}$ is the free $Z_1$-algebra on $\{x+D_{>a},\,
y+D_{>b}\}$.

Let $w_1=w_1(X,Y),\, w_2=w_2(X,Y)$ be two different words on the
noncommutative variables $X,Y$. Then $\upsilon(w_1(x,y))=w_1(a,b)$
and $\upsilon(w_2(x,y))=w_2(a,b)$. Since $\{a,b\}$ generates a free
monoid of rank two, $w_1(a,b)\neq w_2(a,b)$. Moreover, for any
$z_1,z_2\in Z$ with $\upsilon(z_1)=\upsilon(z_2)=1$, $$\upsilon(z_1w_1(x,y))=\upsilon(z_1)w_1(a,b)\neq
\upsilon(z_2)w_2(a,b)=\upsilon(z_2w_2(x,y)).$$  Hence, if we denote, $\widetilde{d}=d+D_{>\upsilon(d)}$ for
any $d\in D$, we get that, for any $z_1,\dotsc,z_s\in
Z\setminus\{0\}$ with $\upsilon(z_i)=1$ and different words $w_1(X,Y),\dotsc,w_s(X,Y)$,
$$\widetilde{z}_iw_i(\widetilde{x},\widetilde{y})=z_iw_i(x,y)+D_{>\upsilon(z_iw_i(x,y))},\quad i=1,\dotsc,s,$$
and therefore $$0\neq \sum_{i=1}^s
\widetilde{z}_iw_i(\widetilde{x},\widetilde{y})\in\gr_\upsilon(D),$$ as desired.

{\underline{Step~3:}} Proof of the result.

Thus the subset $\{x,y\}\subseteq D$ in Step~2 satisfies conditions
(1),(2),(3),(4) in Theorem~\ref{coro:divisionrings}. Let $k$ be a
central subfield of $D$. Applying
Theorem~\ref{coro:divisionrings}, we obtain that either the
$k$-algebra generated by $\{1+x,(1+x)^{-1},1+y,(1+y)^{-1}\}$ is the
free group $k$-algebra on the set $\{1+x,1+y\}$, or there exists
$z\in Z$ such that the $k$-subalgebra of $D$ generated by
$\{1+zx,(1+zx)^{-1},1+zy,(1+zy)^{-1}\}$ is the free $k$-algebra on
the set $\{1+zx,1+zy\}$.
\end{proof}

We would like to note that the existence of free monoids in ordered groups
often happens. Indeed, by \cite[Lemma~8]{LongobardiMajRhemtullanofree}, an
ordered group $(G,<)$ that contains a convex subgroup which is not normal
contains a free monoid on two generators. This implies that  if the orderable group $G$
has no free submonoid on two generators, then all the convex subgroups are normal in
$G$ under any order on $G$ (\cite[p.1420]{LongobardiMajRhemtullanofree}).

Also, if all the convex subgroups of $(G,<)$ are normal, but 
there exists a convex jump $(N,C)$ which is not central, that is, $[C,G]\nsubseteq N$,
then \cite[Section~3.2]{SanchezfreegroupalgebraMNseries} implies the existence of a free monoid in $G$.

Corollary~\ref{coro:freemonoid} is a generalization of \cite[Lemma~2.3]{SanchezfreegroupalgebraMNseries}. 
In that result it is considered the Malcev-Neumann series $K((G;<))$ ring obtained from an ordered group $(G,<)$
and a division ring $K$ endowed with its natural valuation $\upsilon\colon K((G;<))\rightarrow G_\infty$.

\medskip

An instance where our results can be applied is to what is known as microlocalization. 
Let $R$ be a ring with a valuation $\upsilon\colon R\rightarrow \mathbb{Z}_\infty$.
Consider the graded ring $\gr_\upsilon(R)$ and the subset $S$ of $\gr_\upsilon(R)$
consisting of the nonzero homogeneous elements. If $S$ is an Ore subset of $\gr_\upsilon(R)$, then $R$ can 
be embedded in a division ring $D$ such that $\upsilon$ extends to a valuation
$\upsilon\colon D\rightarrow\mathbb{Z}_\infty$ with $\gr_\upsilon(D)=S^{-1}\gr_\upsilon(R)$
and such that $D$ is complete with respect to the topology induced by $\upsilon$, see \cite{VandenEssen}, 
\cite{AsensioVandenBerghVanOystaeyen}, \cite{Lichtmanvaluationmethods} (note
that the filtration in those references is ascending). 
Note that the filtration in those papers is an ascending one.
In some important cases where this process of microlocalization is applied,
the graded ring is commutative,  which is not good for our purpouse of finding  free group algebras because
the graded ring certainly will not contain free algebras.

\medskip

Let $D$ be a division ring and $\upsilon\colon D\rightarrow \mathbb{Z}_\infty$
be a surjective valuation. Note that $D_{\geq 0}$ is a local ring with maximal
ideal $D_{>0}$, thus $D_0=D_{\geq 0}/D_{>0}$ is a division ring. 
Then $\gr_\upsilon(D)$ is isomorphic to a skew Laurent polynomial ring
$D_0[t,t^{-1};\alpha]$ where $t$ is any fixed nonzero element of $D_1=D_{\geq 1}/D_{>1}$.
To obtain a free group algebra in $D$, it is enough to find different elements
$at^m$, $bt^n$ with $m,n\geq 1$ that generate a free algebra over a subfield of $D_0$
fixed by $\alpha$. There are some references that deal with the problem of finding such
free algebras, for example \cite[Section~9C]{Lam2}, \cite[Proposition~2]{LichtmanmatrixringsII},
\cite{PaulSmith1}, \cite{PaulSmith2}.


\appendix
\section{A counterexample}\label{sec:counterexample}

In this section we present a counterexample to
\cite[Proposition~4]{LichtmanmatrixringsII}.

\medskip

Let $G$ be a group and $A,B$ be  subgroups of $G$. By $[A,B]$ we
denote the subgroup of $G$ generated by $\{a^{-1}b^{-1}ab\colon a\in
A,\, b\in B\}$. We define $\gamma_1(G)=G$, and for $i\geq 1$,
$\gamma_{i+1}=[G,\gamma_i(G)]$. Also $\sqrt{A}=\{g\in G\colon g^n\in
A \textrm{ for some } n\geq 1\}$.

Let $F$ be the free group on two letters $x,y$. Let
$F''=[\gamma_1(F),\gamma_1(F)]$. Let $M=F/F''$ be the free
metabelian group on two letters $u=xF''$ and $v=yF''$. Consider the
group ring $\mathbb{Q}[M]$, where $\mathbb{Q}$ denotes the field of
rational numbers.

In \cite{Moufangmetabelian}, it was proved that the submonoid of $M$
generated by $u$ and $v$ is the free monoid on the set $\{u,v\}$.
Thus
\begin{enumerate}[$\qquad (\spadesuit)$]
\item\label{counterexample1} the $\mathbb{Q}$-subalgebra of $\mathbb{Q}[M]$ generated by
$\{u-1,\, v-1\}$ is the free {$\mathbb{Q}$-algebra} on the set
$\{u-1,\, v-1\}$.
\end{enumerate}

Consider the augmentation map
$\varepsilon\colon\mathbb{Q}[M]\rightarrow \mathbb{Q}$ and let
$\omega(\mathbb{Q}[M])=\ker \varepsilon$ be the \emph{augmentation
ideal}. For each $i\geq 1$, define the \emph{dimension subgroups} of
$M$ as
$$D_i(\mathbb{Q}[M])=\{x\in M\colon x-1\in\omega(\mathbb{Q}[M])^i\}.$$
Since $\mathbb{Q}$ is a field of characteristic zero,
\cite[Chapter~11, Theorem~1.10]{Passman2} implies that
$D_i(\mathbb{Q}[M])=\sqrt{\gamma_i(M)}$ for each $i\geq 1$. By
\cite[Theorem~D2]{HartleyResidualnilpotence}, $M$ is a residually
torsion-free nilpotent group. Thus $\bigcap\limits_{i\geq
1}\sqrt{\gamma_i(M)}=\bigcap\limits_{i\geq
1}D_i(\mathbb{Q}[M])=\{1\}$. This implies that
$\bigcap\limits_{i\geq 1}\omega(\mathbb{Q}[M])^i=0$ by
\cite[Chapter~11, Theorem~1.21]{Passman2}.

The powers of the ideal $\omega(\mathbb{Q}[M])$ define a filtration
of $\mathbb{Q}[M]$ $$\mathbb{Q}[M]\supseteq
\omega(\mathbb{Q}[M])\supseteq\omega(\mathbb{Q}[M])^2\supseteq
\dotsb\supseteq \omega(\mathbb{Q}[M])^i\supseteq
\omega(\mathbb{Q}[M])^{i+1}\supseteq \dotsb$$ Consider the graded
ring associated to this filtration
$$\gr(\mathbb{Q}[M])=\bigoplus_{i\geq 0}\frac{\omega(\mathbb{Q}[M])^i}{\omega(\mathbb{Q}[M])^{i+1}},$$
where we assume $\omega(\mathbb{Q}[M])^0=\mathbb{Q}[M]$.

Consider now, for each $i\geq 1$, the torsion-free abelian groups
$D_i(\mathbb{Q}[M])/D_{i+1}(\mathbb{Q}[M])$. The direct sum
$$L(M)=\bigoplus_{i\geq 1}\frac{D_i(\mathbb{Q}[M])}{D_{i+1}(\mathbb{Q}[M])}$$
can be endowed with a Lie $\mathbb{Z}$-algebra structure, see for
example \cite[VIII.2]{Passibookaugmentation}. Consider the Lie
$\mathbb{Q}$-algebra $Q\otimes_\mathbb{Z}L(M)$ and its universal
enveloping algebra $U(Q\otimes_\mathbb{Z}L(M))$. By
\cite{Quillengraded} or \cite[VIII,
Theorem~5.2]{Passibookaugmentation}, there exists an isomorphism of
$\mathbb{Q}$-algebras $\Theta\colon
U(Q\otimes_\mathbb{Z}L(M))\rightarrow \gr(\mathbb{Q}[M])$ such that
if $x+D_{i+1}(M)\in D_i(\mathbb{Q}[M])/D_{i+1}(\mathbb{Q}[M])$, then
$$\Theta(x+D_{i+1}(M))=(x-1)+\omega(\mathbb{Q}[M])^{i+1}\in \frac{\omega(\mathbb{Q}[M])^i}{\omega(\mathbb{Q}[M])^{i+1}}.$$
Since $U(Q\otimes_\mathbb{Z}L(M))$ is a domain, $\gr(\mathbb{Q}[M])$
is a domain. Now \cite[Proposition~2.6.1]{Cohnskew}, implies that
the map $\chi\colon\mathbb{Q}[M]\rightarrow \mathbb{Z}_\infty$
defined by $\chi(f)=\sup\{i\colon f\in\omega(\mathbb{Q}[M])^i\}$ is
a valuation. Observe that, since
$u'=u-1,v'=v-1\in\omega(\mathbb{Q}[M])$,
\begin{enumerate}[$\qquad (\spadesuit\spadesuit)$]
\item\label{counterexample2} $\chi(u')\geq 1$ and $\chi(v')\geq 1$.
\end{enumerate}

Since $M$ is an extension of torsion-free abelian groups,
$\mathbb{Q}[M]$ is an Ore domain. Let $D$ be the Ore division ring
of fractions of $\mathbb{Q}[M]$. Then the valuation $\chi$ can be
extended to a valuation on $D$, see for example
\cite[Proposition~9.1.1]{Cohnskew}. Therefore we have a valuation
$\chi\colon D\rightarrow \mathbb{Z}_\infty$ such that $u',v'\in D$,
the $\mathbb{Q}$-subalgebra of $D$ generated by $\{u',v'\}$ is the
free $\mathbb{Q}$-algebra on $\{u',v'\}$ by $(\spadesuit)$. By
$(\spadesuit\spadesuit)$, $\chi(u')\geq 1$, $\chi(v')\geq 1$. But
$1+u'=u$ and $1+v'=v$ do not generate a free group
$\mathbb{Q}$-algebra on $\{1+u',\, 1+v'\}$ because $u$ and $v$ do
not even generate a free group.


\section{}\label{sec:corrigenda}

In the following results, the existence of free algebras in certain division rings was proved. Then 
\cite[Proposition~4]{LichtmanmatrixringsII} was used to show the existence of a free group algebra in certain division rings.
They may well be true, but the proof provided depends on \cite[Proposition~4]{LichtmanmatrixringsII}, and our 
Theorem~\ref{coro:divisionrings} cannot be applied.

\begin{enumerate}[(1)]
\item \cite[Corollary~1 of Proposition~4]{LichtmanmatrixringsII}
\item \cite[Corollaries~2.1,2.2,Theorems~B,C,D]{FigueiredoGoncalvesShirvani}
\item \cite[Corollary B, Theorems C,D,E]{ShirvaniGoncalvesdifferential}
\item \cite[Theorem~1]{ShirvaniGoncalvesLarge}
\item \cite[Theorem~5]{Lichtmanfreeuniversalenveloping}
\item \cite[Theorem~4]{GoncalvesTengan}
\item \cite{GoncalvesShirvaniSurvey} is a survey on the existence of free objects in division rings where some of 
the foregoing  results are stated.
\item \cite[Theorem~1.1,Proposition~3.3,Theorem~3.5]{FerreiraGoncalvesSanchez1}. In these results the
existence of a free algebra generated by symmetric elements is proved.
\item \cite[Corollary~4.10]{Fehlberg}
\end{enumerate}

\medskip

In the following results, the existence of free group algebras
in division rings was proved using \cite[Proposition~4]{LichtmanmatrixringsII}, but with the help of other results and methods
they can be reproved, at least partially.

\begin{enumerate}[(1)]
\item \cite[Corollary~2 of Proposition~4]{LichtmanmatrixringsII} was proved and generalized in \cite{SanchezObtaininggraded}.
Hence, as noted in \cite[Comments~3.1(2)]{SanchezObtaininggraded}, 
\cite[Lemma~7.5]{HerberaSanchez}, 
\cite[Corollary~5.4]{Herberasanchezinfiniteinversion} and \cite[Proposition~3.1]{FerreiraGoncalvesSanchez1} 
are correct because they relied on [15, Corollary 2 of Proposition~4]. 
Furthermore, the proof of \cite[Proposition 3.1]{FerreiraGoncalvesSanchez1} can be simplified. The result
follows by applying \cite[Corollary~3.4]{SanchezObtaininggraded} to \cite[Eq. (4), p. 78]{FerreiraGoncalvesSanchez1}.
 
\item The proof of \cite[Proposition~3.4,Proof of Theorem~3.1 for ordered groups of type~1]{SanchezfreegroupalgebraMNseries}
used \cite[Corollary~2.1]{FigueiredoGoncalvesShirvani}.
This result was needed to obtain a free group algebra in the Ore field of fractions of the group algebra $k[G]$
of a nonabelian torsion-free nilpotent group $G$ over a field $k$. 
This free group algebra can be obtained by \cite[Th\'eor\`eme]{Cauchoncorps}
except for the characteristic two case.
\end{enumerate}

\bibliographystyle{amsplain}
\bibliography{grupitosbuenos}

\end{document}